\newtheorem{theorem}{Theorem}[section]
\newtheorem{lemma}{Lemma}[section]
\newtheorem{proposition}{Proposition}[section]
\newtheorem{corollary}[proposition]{Corollary}
\newtheorem{remark}{Remark}[section]
\newtheorem{conjecture}{Conjecture}[section]
\title{Energy distribution of solutions to defocusing semi-linear wave equation in higher dimensional space $\footnote{MSC classes: 35L05, 35L71.}$}
\author{Liang Li, Ruipeng Shen}
\affil{Centre for Applied Mathematics\\ Tianjin University\\ Tianjin, China}
\date{May 9, 2021}
\begin{document}

\maketitle
\begin{abstract}
The topic of this paper is a semi-linear, defocusing wave equation $u_{t t}-\Delta u=-|u|^{p-1} u$ in sub-conformal case in the higher dimensional space whose initial data are radical and come with a finite energy. We prove some decay estimates of the the solutions if initial data decay at a certain rate as the spatial variable tends to infinity. A combination of this property with a method of characteristic lines give a scattering result if the initial data satisfy
$$E_{\kappa}\left(u_{0}, u_{1}\right)=\int_{\mathbb{R}^{d}}\left(|x|^{\kappa}+1\right)\left(\frac{1}{2}\left|\nabla u_{0}(x)\right|^{2}+\frac{1}{2}\left|u_{1}(x)\right|^{2}+\frac{1}{p+1}\left|u_{0}(x)\right|^{p+1}\right) d x<+\infty.$$
Here $\kappa=\frac{(2-d)p+(d+2)}{p+1}$.
\end{abstract}
\section{Introduction}
\subsection{Background}
In this work we consider the defocusing nonlinear wave equation in dimensions $d\geq 3$.
$$
\left\{\begin{array}{ll}
\partial_{t}^{2} u-\Delta u=-|u|^{p-1} u, \quad(x, t) \in \mathbb{R}^{d} \times \mathbb{R} ; \\
u(\cdot, 0)=u_{0} ; & (CP1)\\
u_{t}(\cdot, 0)=u_{1} . & 
\end{array}\right.
$$
The conserved energy is defined by
$$
E\left(u, u_{t}\right)=\int_{\mathbb{R}^{d}}\left(\frac{1}{2}|\nabla u(x, t)|^{2}+\frac{1}{2}\left|u_{t}(x, t)\right|^{2}+\frac{1}{p+1}|u(x, t)|^{p+1}\right) d x
.$$
\textbf{Local theory} \quad Defocusing nonlinear wave equations$$
\partial_{t}^{2} u-\Delta u=-|u|^{p-1} u, \quad(x, t) \in \mathbb{R}^{d} \times \mathbb{R}
$$
have been extensively studied, especially in the 3 or higher dimensional space. The existence and uniqueness of solutions to semi-linear wave equation like (CP1) follows a combination of suitable Strichartz estimates and a fixed-point argument, Kapitanski \cite{ref1} and Lindblad-Sogge \cite{ref2} give more details.\\\\
\begin{conjecture}
Any solution to (CP1) with initial data $(u_{0},u_{1})\in \dot{H}^{s_{p}} \times \dot{H}^{s_{p}-1}$ must exist for all time $t\in \mathbb{R}$ and scatter in both two time directions.
\end{conjecture}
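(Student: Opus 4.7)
The statement is the critical-regularity scattering conjecture for the defocusing sub-conformal wave equation, and it is essentially open in full generality; the main theorem of this paper resolves it under the strictly stronger hypothesis $E_\kappa(u_0,u_1)<\infty$. My plan would be to attempt the Kenig--Merle concentration/compactness and rigidity programme, and to try to reduce the critical-regularity case to the weighted-energy case that the paper will prove.

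First, local well-posedness in $\dot{H}^{s_p}\times\dot{H}^{s_p-1}$ and small-data scattering are already available through the Strichartz framework of Kapitanski and Lindblad--Sogge together with a fixed-point argument in the critical space-time norm. Supposing the conjecture fails, set
\begin{equation*}
A_c:=\sup\bigl\{A>0:\text{every solution with critical norm }\le A\text{ scatters in both directions}\bigr\},
\end{equation*}
so that $A_c<\infty$. A linear profile decomposition at the critical regularity, combined with the standard nonlinear perturbation lemma, produces a minimal non-scattering critical element whose trajectory is precompact in $\dot{H}^{s_p}\times\dot{H}^{s_p-1}$ modulo the scaling symmetry (the radial assumption eliminates translations).

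The remaining task is rigidity: the only such critical element must be $(0,0)$. I would try to combine the conformal Morawetz identity, which under the energy hypothesis produces
\begin{equation*}
\int_0^{\infty}\!\!\int_{\mathbb{R}^d}\frac{|u(x,t)|^{p+1}}{|x|+|t|+1}\,dx\,dt\lesssim E(u_0,u_1),
\end{equation*}
with the decay estimates established earlier in the paper, aiming to show that compactness of the trajectory forces a polynomial spatial decay of the profile strong enough to guarantee $E_\kappa<\infty$. Once the critical element has finite $E_\kappa$, the scattering theorem of this paper applies and contradicts the standing non-scattering assumption.

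The hard part---and the reason the conjecture remains open---is the bridge between the critical Sobolev regularity and the weighted energy space. In the sub-conformal range $s_p<1$, a trajectory that is precompact in $\dot{H}^{s_p}\times\dot{H}^{s_p-1}$ does not automatically have finite energy, let alone finite $E_\kappa$. Producing the required polynomial spatial decay from scaling-critical compactness alone would require a new monotone quantity or a new decay mechanism that, to my knowledge, no existing virial identity supplies for this range of $p$. This is precisely the step where my plan---and every closely related approach---truly breaks down, which is also why the present paper contents itself with the weighted-energy version of the statement.
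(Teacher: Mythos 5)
There is nothing in the paper to compare your proposal against: the statement you were given is Conjecture 1.1, which the authors explicitly describe as ``still an open problem'' and do not prove. You correctly recognized this, and your write-up is a programme sketch rather than a proof, with the fatal gap honestly identified. For the record, that gap is real and is exactly where every known attempt stalls: in the sub-conformal range one has $s_p=\frac{d}{2}-\frac{2}{p-1}<\frac12$, so data in $\dot{H}^{s_p}\times\dot{H}^{s_p-1}$ need not have finite energy, the conserved energy is not controlled by the critical norm, and no known monotonicity formula converts precompactness of the trajectory modulo scaling into the spatial decay needed for $E_\kappa<\infty$. Even the construction and rigidity analysis of a minimal element at this regularity ordinarily require an a priori uniform bound on the critical norm (this is precisely the conditional setting of the ``scattering with a priori estimates'' results the paper cites), so your concentration--compactness reduction is itself conditional before the rigidity step is ever reached.

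One factual correction: the hypothesis $E_\kappa(u_0,u_1)<\infty$ of Theorem 1.2 is not ``strictly stronger'' than $(u_0,u_1)\in\dot{H}^{s_p}\times\dot{H}^{s_p-1}$. The two data classes are not nested --- a radial finite-energy pair with weighted energy decay need not lie in $\dot{H}^{s_p}\times\dot{H}^{s_p-1}$ for $s_p<1$, and conversely --- so the paper's scattering theorem is a genuinely different statement rather than a special case of the conjecture. Consequently, even if you could show that a critical element has finite $E_\kappa$, you would still need to check that the paper's scattering conclusion (convergence in $\dot{H}^1\times L^2$ to a free wave) yields the contradiction with non-scattering measured in the critical norm; that transfer is itself nontrivial below energy regularity.
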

This is still an open problem. Although there are many related results by different methods.\\
\textbf{Scattering results with a priori estimates}\quad
It has been proved that if a solution $u$ with a maximal lifespan I satisfies an a priori estimate $$
\sup _{t \in I}\left\|\left(u(\cdot, t), u_{t}(\cdot, t)\right)\right\|_{\dot{H}^{s_{p}} \times \dot{H}^{s_{p}-1}\left(\mathbb{R}^{d}\right)}<+\infty
,
$$
then $u$ is defined for all time $t$ and scatters. In fact, there are many works for different ranges of $d$ and $p$, sometimes with a radial assumption. Different methods were used for different range of $d$ and $p$. The details can be found in Kenig-Merle \cite{ref4}, Killip-Visan \cite{ref14} (3 dimension), Killip-Visan \cite{ref15} (all dimensions) for energy supercritical case and  R.Shen \cite{ref5} , Dodson-Lawrie \cite{ref6} (3 dimension), Rodriguez \cite{ref16} (dimension 4 and 5) for energy subcritical case.\\\\
\textbf{Strong Assumption on initial data}\quad
There are multiple scattering results if we assume that the initial data satisfy stronger regularity and/or decay conditions. These results are usually proved via a suitable global space-time integral estimate. In $d\geq 3$ case, if the initial data $(u_0,u_1)$ satisfy 
$$ \int_{\mathbb{R}^{d}}(1+|x|)^{2}\left(\frac{1}{2}\left|\nabla u_{0}(x)\right|^{2}+\frac{1}{2}\left|u_{1}(x)\right|^{2}+\frac{1}{p+1}\left|u_{0}(x)\right|^{p+1}\right)<+\infty
$$the conformal conservation law (see Ginibre-Velo \cite{ref18} and Hidano \cite{ref19}) leads to the scattering of solutions for $1+4/(d-1)\leq p< 1+4/(d-2).$ In 3-dimension case, R.Shen \cite{ref17} proved the scattering result for $3\leq p\leq 5$ if initial data $\left(u_{0}, u_{1}\right)$ are radial and satisfy $$
\int_{\mathbb{R}^{3}}(|x|^{\kappa}+1)\left(\frac{1}{2}\left|\nabla u_{0}\right|^{2}+\frac{1}{2}\left|u_{1}\right|^{2}+\frac{1}{p+1}|u_{0}|^{p+1}\right) d x<+\infty,
$$ here $\kappa>\kappa_{0}(p)=\frac{5-p}{p+1}$. In $d=3,p=3$ case, Dodson \cite{ref10} gives a proof of the conjecture above for (CP1) with radial data.\\
But most of these results above are in the conformal case or super-conformal case.
\subsection{Main tools}
Next we introduce main tools of this paper. The first tool is still a Morawetz-type estimate, and the second tool is the method of characteristic lines.\\
\textbf{Morawetz estimates}\quad
This kind of estimates were first found by Morawetz \cite{ref3} for wave/Klein-Gordon equations. Lin-Strauss \cite{ref7} then generalized Morawetz estimates to Schr{\"o}ndinger equations. Coliiander-Keel-Staffilani-Takaoka-Tao \cite{ref8} introduced interaction Morawetz estimates for Schr{\"o}ndinger equations. Nowadays the Morawetz estimate has been one of the most important tools in the study of dispersive equations.\\\\
\textbf{Method of characteristic lines} \quad R.Shen \cite{ref9} generalizes the 3D method to higher dimensions. Let $u$ be a radial solution to (CP1) with a finite energy, reduce the equation to a one-dimensional one by defining $w(r, t)=r^{\frac{d-1}{2}} u(r, t)$, and considering the equation that $w$ satisfies $$\left(\partial_{t}+\partial_{r}\right)\left(w_{t}-w_{r}\right)=\partial_{t}^{2} w-\partial_{r}^{2} w=-\frac{(d-1)(d-3)}{4} d r^{\frac{d-5}{2}} u-r^{\frac{d-1}{2}}|u|^{p-1} u.$$
This enable us to evaluate the variation of $w_{t} \pm w_{r}$  along characteristic lines $t \mp r=Const$ and obtain plentiful information about the asymptotic behaviour of solutions.

\subsection{Main Results}
Now we give the main results of this work. Throughout this paper we always assume $d\geq 3$ and $p<1+\frac{4}{d-1}$. In this case it is well known that if the initial data come with a finite energy, then the solution exist for all time. Please refer to Ginibre-Velo \cite{ref21}. Our first result is about the energy distribution of solutions to (CP1).
\begin{theorem}
Assume $d\geq3,\,1+\frac{4}{d-1}>p>1+\frac{2}{d-1}$. Let $u$ be a solution to (CP1) with a finite energy. Then\\
(a) The following limits hold as time tends to infinity $$
\lim _{t \rightarrow \pm \infty} \int_{|x|<|t|} \frac{|t|-|x|}{|t|} e(x, t) d x=0.
$$
(b) The inward/outward part of energy vanishes as time tends positive/negative infinity.
$$
\lim _{t \rightarrow \pm \infty} \int_{\mathbb{R}^{d}}\left(\left|u_{r} \pm u_{t}\right|^{2}+|\nabla\mkern-13mu/ u|^{2}+|u|^{p+1}\right) d x=0.
$$
(c) Furthermore, if the initial data satisfy $E_{\kappa}\left(u_{0}, u_{1}\right)<+\infty$ for a constant $$\kappa \in \left(0,\frac{(d-1)(p-1)-2}{2}\right),\qquad if \,\, 1+\frac{4}{d-1}>p>1+\frac{2}{d-1};$$
\qquad then we have the following decay estimates$$
\lim _{t \rightarrow \pm \infty} \int_{|x|<|t|} \frac{|t|-|x|}{|t|^{1-\kappa}} e(x, t) d x=0;
$$$$
\lim _{t \rightarrow \pm \infty}|t|^{\kappa} \int_{\mathbb{R}^{d}}\left(\left|u_{r} \pm u_{t}\right|^{2}+\left.|\nabla\mkern-13mu/ u\right|^{2}+|u|^{p+1}\right) d x=0.
$$\end{theorem}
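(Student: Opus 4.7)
The plan is to drive everything from a single (weighted) Morawetz identity. The classical Morawetz computation for defocusing NLW, using the multiplier $\partial_{r}u + \frac{d-1}{2r}u$, yields the global spacetime bound
$$
\iint_{\mathbb{R}\times\mathbb{R}^{d}}\left[\frac{|\nabla\mkern-13mu/ u|^{2}}{|x|}+\frac{(d-1)(p-1)-2}{2(p+1)}\,\frac{|u|^{p+1}}{|x|}\right]dx\,dt\lesssim E(u_{0},u_{1}),
$$
whose nonlinear coefficient is strictly positive precisely in the sub-conformal range $p>1+\frac{2}{d-1}$. Inserting the extra weight $|x|^{\kappa}$ into the same multiplier produces the analogous estimate with $|x|^{\kappa-1}|u|^{p+1}$ on the left, which stays summable provided $\kappa<\frac{(d-1)(p-1)-2}{2}$, so that the nonlinear coefficient after the additional integration by parts remains positive. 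These two bounds, controlled respectively by $E(u_{0},u_{1})$ and $E_{\kappa}(u_{0},u_{1})$, will play the roles of the basic and refined Morawetz estimates.

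For part (a), I would study $Q(t)=\int_{|x|<|t|}(|t|-|x|)\,e(x,t)\,dx$ and compute $Q'(t)$ via the stress--energy tensor. The boundary contribution at $|x|=|t|$ vanishes because the weight vanishes there, while the bulk contribution is pointwise dominated by the Morawetz integrand above, so integrability of $|Q'|$ together with the trivial bound $Q(t)\lesssim|t|\,E$ yields $Q(t)/|t|\to 0$ as $|t|\to\infty$. For part (b), I would use the one-dimensional reduction $w(r,t)=r^{(d-1)/2}u(r,t)$ indicated in the introduction: integrating $(\partial_{t}+\partial_{r})(w_{t}-w_{r})$ along incoming characteristic lines $t-r=\mathrm{const}$ expresses the $t\to\infty$ limit of $\int(|u_{r}+u_{t}|^{2}+|\nabla\mkern-13mu/ u|^{2}+|u|^{p+1})\,dx$ as the sum of (i) an interior contribution in $\{|x|<(1-\varepsilon)|t|\}$ controlled by part (a), (ii) a near-cone contribution in the thin shell $\{(1-\varepsilon)|t|<|x|<|t|\}$ controlled by the Morawetz bound, and (iii) an error coming from the $r^{(d-5)/2}u$ source, which is treated by the same interior/near-cone dichotomy. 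The cases $t\to-\infty$ follow from the time-reversal symmetry of the equation. For part (c), I would re-run both arguments verbatim with the $|x|^{\kappa}$-weighted Morawetz bound replacing the unweighted one, producing the quantitative $|t|^{\kappa}$ decay rates.

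The main obstacle, I expect, is the sub-conformal bookkeeping: because $p<1+\frac{4}{d-1}$, the conformal conservation law does not close, and so none of the asymptotic information can be read off from an exact identity. Everything must be extracted from the Morawetz bound, which requires careful control of boundary contributions at $|x|=|t|$ after every weighted integration by parts, as well as a tight matching between the interior and near-cone regions in the characteristic-line integration of part (b). The upper threshold $\kappa<\frac{(d-1)(p-1)-2}{2}$ in part (c) is exactly the positivity cutoff for the weighted nonlinear coefficient, and approaching this threshold without sacrificing the integrability used above is the most delicate quantitative point of the argument.
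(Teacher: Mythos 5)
Your proposal correctly identifies Morawetz-type estimates as the engine, but the way you deploy them leaves the central difficulty unresolved. A global spacetime bound such as $\iint\bigl(|\nabla\mkern-13mu/ u|^{2}+|u|^{p+1}\bigr)|x|^{-1}\,dx\,dt\lesssim E$ only yields $\liminf$-type information: it shows the relevant flux is small along a sequence of times, not that it tends to zero. Your attempt to close this via $Q(t)=\int_{|x|<t}(t-|x|)e\,dx$ fails at the step ``the bulk contribution is pointwise dominated by the Morawetz integrand.'' Computing $Q'$ with the stress--energy tensor gives
$$
Q'(t)=\int_{|x|<t}\Bigl(\tfrac12|u_{r}+u_{t}|^{2}+\tfrac12|\nabla\mkern-13mu/ u|^{2}+\tfrac{1}{p+1}|u|^{p+1}\Bigr)dx,
$$
which contains $|u_{r}+u_{t}|^{2}$ --- absent from the Morawetz integrand altogether --- and lacks the factor $|x|^{-1}$ (inside the cone $|x|^{-1}$ is only bounded below by $t^{-1}$, so you lose a full power of $t$). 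Integrability of $Q'$ is therefore not established; indeed it is essentially a stronger, time-integrated version of part (b), so your parts (a) and (b) feed into each other circularly. Separately, your part (b) routes through the reduction $w=r^{(d-1)/2}u$ and the pointwise decay of Lemma 2.1, both of which require radial data; Theorem 1.1 makes no radial assumption (only Theorem 1.2 does).

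What the paper actually does is evaluate the Morawetz identity with the multiplier radius tied to the time, $R=t$ (and $t_{1}=-t$, $t_{2}=t$). In the sub-conformal range the interior nonlinear coefficient $\frac{(d-1)(p-1)-2}{p+1}$ falls short of the $\frac{2}{p+1}$ needed to reconstitute the full energy, and the deficit $\frac{4-(d-1)(p-1)}{p+1}|u|^{p+1}$ is moved to the right-hand side, producing the self-improving recurrence
$$
Q(t)\le\frac{\lambda}{t}\int_{0}^{t}Q(t')\,dt'+2\int_{\mathbb{R}^{d}}\min\{|x|/t,1\}\,e(x,0)\,dx,\qquad \lambda=\frac{4-(d-1)(p-1)}{2}\in(0,1),
$$
for a quantity $Q$ that simultaneously controls every term appearing in (a) and (b). Taking upper limits and using $\lambda<1$ forces $\limsup Q=0$; this is the mechanism that upgrades averaged control to a genuine limit, and it is missing from your outline. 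Part (c) then follows by multiplying the recurrence by $t^{\kappa-1}$ and integrating: the threshold $\kappa<\frac{(d-1)(p-1)-2}{2}$ is exactly the condition $\frac{\lambda}{1-\kappa}<1$ needed for that bootstrap to close --- a time-weight condition, not the positivity cutoff of a spatially $|x|^{\kappa}$-weighted Morawetz multiplier as you suggest. If you wish to salvage your plan, the indispensable ingredient is some version of this recurrence (or another device converting $\int^{\infty}F\,dt<\infty$ into $F\to0$); the weighted spacetime Morawetz bounds alone do not suffice.
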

As an application of the theory on energy distribution, we also prove the following scattering result.
\begin{theorem}
Assume $d\geq3,\, 1+\frac{4}{d-1}>p>\frac{3-d+2\sqrt{d^2-d+1}}{d-1}$. Let $u$ be a radial solution to (CP1) with a finite energy and the initial data $(u_{0}, u_{1})$ satisfy $E_{\frac{(2-d)p+(d+2)}{p+1}}\left(u_{0}, u_{1}\right)<+\infty$, then the solution $u$ scatters in both two time directions. More precisely, there exist two radial finite-energy free waves $\tilde{u}^{+}, \tilde{u}^{-}$, so that 
$$
\lim _{t \rightarrow \pm \infty}\left\|\left(\tilde{u}^{\pm}(\cdot, t)-u(\cdot, t), \tilde{u}_{t}^{\pm}(\cdot, t)-u_{t}(\cdot, t)\right)\right\|_{\dot{H}^{1} \times L^{2}\left(\mathbb{R}^{d}\right)}=0.
$$
Here $E_{\frac{(2-d)p+(d+2)}{p+1}}\left(u_{0}, u_{1}\right)=\int_{\mathbb{R}^{d}}\left(|x|^{\frac{(2-d)p+(d+2)}{p+1}}+1\right)\left(\frac{1}{2}\left|\nabla u_{0}(x)\right|^{2}+\frac{1}{2}\left|u_{1}(x)\right|^{2}+\frac{1}{p+1}\left|u_{0}(x)\right|^{p+1}\right) d x.$
\end{theorem}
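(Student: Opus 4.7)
The plan is to combine Theorem~1.1(c) with the method of characteristic lines outlined in the introduction. Setting $w(r,t) = r^{(d-1)/2}u(r,t)$ reduces the radial equation to a one-dimensional wave equation whose source is $-\tfrac{(d-1)(d-3)}{4}r^{(d-5)/2}u - r^{(d-1)/2}|u|^{p-1}u$, and the quantities $G_{\pm} := w_t \pm w_r$ are transported along the characteristics $t \mp r = \mathrm{const}$ when the source is absent; here they satisfy first-order ODEs along those lines driven by the above source. To establish scattering as $t \to +\infty$ it suffices to show (i) that the pointwise limit $G_{-}^{\infty}(\tau) := \lim_{t \to +\infty} G_{-}(t-\tau, t)$ exists for a.e.\ $\tau \in \mathbb{R}$, and (ii) that $G_{+}(\cdot, t) \to 0$ in $L^2_{dr}$, the latter being an immediate translation of Theorem~1.1(b). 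The limit profile $G_{-}^{\infty}$ then determines the finite-energy free wave $\tilde{u}^{+}$; $\tilde{u}^{-}$ is obtained symmetrically.

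\textbf{Where the hypothesis on $p$ enters.} A direct computation shows that $\kappa_0 := \frac{(2-d)p + (d+2)}{p+1}$ lies in the interval $\bigl(0, \tfrac{(d-1)(p-1)-2}{2}\bigr)$ required by Theorem~1.1(c) if and only if $(d-1)p^2 + (2d-6)p - (3d+5) > 0$. The positive root of this quadratic is exactly $p_{*} = \tfrac{3 - d + 2\sqrt{d^2-d+1}}{d-1}$, so the assumption $p > p_{*}$ is precisely what permits Theorem~1.1(c) to be invoked with $\kappa = \kappa_0$, supplying the key decay $|t|^{\kappa_0}\int_{\mathbb{R}^d}\bigl(|u_r \pm u_t|^2 + |u|^{p+1}\bigr)\,dx \to 0$ and the corresponding cone-integral statement.

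\textbf{Absolute integrability along characteristics.} Along a fixed outgoing characteristic $t - r = \tau$,
$$G_{-}(r_2, t_2) - G_{-}(r_1, t_1) = -\int_{t_1}^{t_2}\!\Bigl[\tfrac{(d-1)(d-3)}{4}\,r^{(d-5)/2}u + r^{(d-1)/2}|u|^{p-1}u\Bigr]_{r = s - \tau}\,ds.$$
The heart of the proof is absolute integrability of this source in $s$. Rewriting in $w$ converts the nonlinear piece to $|w|^{p-1}w / r^{(d-1)(p-1)/2}$ and the curvature piece to $\tfrac{(d-1)(d-3)}{4}\,w/r^{2}$. One controls $|w|$ pointwise along the characteristic using the identity $\partial_s w(s-\tau, s) = G_{+}(s-\tau, s)$ together with the $L^2$-smallness of $G_{+}$ at late times; one then substitutes the spatial decays from Theorem~1.1(c) and the cone estimate, after which the source is dominated by $o(s^{-1-\delta})$ for some $\delta > 0$. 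The strict inequality $\kappa_0 < \tfrac{(d-1)(p-1)-2}{2}$ is what furnishes the positive margin $\delta$. The curvature term is easier and is integrable already for $d \ge 3$ thanks to the Hardy-type bound $\int u^2 r^{d-3}\,dr \lesssim E(u,u_t)$ combined with the same decay. Uniform integrability on compact $\tau$-ranges then produces $G_{-}^{\infty}(\tau)$ pointwise, and the energy majorant promotes this to $L^2_{d\tau}$ convergence by dominated convergence.

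\textbf{Construction of the scattering wave and main difficulty.} Define $\tilde u^{+}$ as the radial, finite-energy free wave whose $(w_t - w_r)$-asymptote along outgoing characteristics equals $G_{-}^{\infty}$ and whose $(w_t + w_r)$-asymptote vanishes; such a free wave is uniquely determined in the reduced one-dimensional picture by d'Alembert. Translating back via the Plancherel-type identity $\int_{\mathbb{R}^d}(|\nabla u|^2 + u_t^2)\,dx = \int_0^\infty(|G_{+}|^2 + |G_{-}|^2)\,dr + \mathrm{(lower~order~in~}w/r\mathrm{)}$ converts the pointwise and $L^2_{d\tau}$ statements into the required $\dot H^1 \times L^2$ convergence $\|(u - \tilde u^{+}, u_t - \tilde u^{+}_t)(\cdot,t)\|_{\dot H^1 \times L^2} \to 0$. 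The principal technical obstacle is the exponent bookkeeping in the previous step: the specific value $\kappa = \kappa_0$ places the characteristic-line source at the borderline of integrability, so the pointwise bounds on $|w|$, the Hardy-type interpolations, and the rate delivered by Theorem~1.1(c) have to balance exactly, which is what pins the lower bound $p > p_{*}$.
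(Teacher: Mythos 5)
Your overall architecture (reduction to $w=r^{(d-1)/2}u$, transport of $w_t\mp w_r$ along characteristics, radiation fields, input from Theorem 1.1) matches the paper's, and your computation that $\kappa_0:=\frac{(2-d)p+(d+2)}{p+1}<\frac{(d-1)(p-1)-2}{2}$ exactly when $p>p_*$ is correct. But there is a genuine gap at the decisive step, and the hypothesis $p>p_*$ ends up being used in the wrong place. The characteristic-line estimates only deliver convergence of the outgoing profile in $L^2_{loc}$ of the retarded time $\tau=t-r$; your upgrade to global $\dot H^1\times L^2$ convergence ``by dominated convergence'' using ``the energy majorant'' does not work, since the uniform bound $\int|w_t\mp w_r|^2\,dr\le 2E$ is not a pointwise dominating function and energy could a priori persist arbitrarily deep inside the light cone. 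The paper closes this by splitting space at $r=t-ct^{\beta}$ with $\beta=\frac{(d-1)(p-1)-2}{p+1}$: on the exterior shell the quantitative tail rate $|v_+(r,t)-2g_+(t-r)|\lesssim r^{-\beta/2}$ integrates over a window of length $\sim ct^{\beta}$ to a contribution $O(c)$, while on the interior $\{|x|<t-ct^{\beta}\}$ one has $\frac{t-|x|}{t}\ge ct^{-\kappa_0}$ because $\kappa_0=1-\beta$, so Theorem 1.1(c) with $\kappa=\kappa_0$ forces $\int_{|x|<t-ct^{\beta}}e(x,t)\,dx\to0$. You never perform this splitting and never invoke the conclusion $t^{\kappa_0}\int\frac{t-|x|}{t}e\,dx\to0$, yet this interior step is the only place where the decay hypothesis $E_{\kappa_0}<\infty$ and the constraint $p>p_*$ are actually consumed.

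Relatedly, your treatment of the source along characteristics is misdirected and would not close as stated. The paper bounds the tail of the characteristic integral by H\"older against conserved quantities ($\int r^{d-3}|u|^2\,dt\lesssim E$ and $\int r^{d-1}|u|^{p+1}\,dt\lesssim E$ along the line), obtaining the rate $(t_1-\eta)^{-1/2}+(t_1-\eta)^{-\beta/2}$; this uses only finite energy and $p>1+\frac{2}{d-1}$, not $p>p_*$ and not Theorem 1.1(c). Your claimed pointwise bound $o(s^{-1-\delta})$ on the source is not attainable: the refined pointwise estimate $|u|\lesssim r^{-2(d-1)/(p+3)}$ gives only $r^{(d-1)/2}|u|^p\lesssim r^{-3(d-1)(p-1)/(2(p+3))}$, which for $d=3$ is integrable along the characteristic only when $p>3$, i.e., never in the sub-conformal range; and controlling $|w|$ by integrating $w_t+w_r$ with only its $L^2$-smallness yields a bound growing like $\sqrt{s}$. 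Moreover it is the quantitative tail rate $r^{-\beta/2}$, not mere absolute integrability, that the exterior-shell estimate consumes, so recovering it is essential. Finally, $w_t+w_r$ differs from $r^{(d-1)/2}(u_t+u_r)$ by $\frac{d-1}{2}r^{(d-3)/2}u$, so your claim that Theorem 1.1(b) ``immediately'' gives $L^2_{dr}$-smallness of $w_t+w_r$ needs the correction term to be handled separately, as the paper does using Lemma 2.1 on the relevant region.
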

\subsection{The Structure of This Paper}
This paper is organized as follows. We first give a few preliminary results in section 2, In section 3 we give a Morewetz identity and a Morawetz inequality, which is the main tool of this paper. Next in section 4 we prove the energy distribution properties of the solutions. Finally we prove the scattering theory of the solution $u$ under an additional decay assumption in the last section.
\section{Preliminary Results}
\textbf{Notations}\quad In this work we will use the notation $e(x,t)$ for the energy density
$$
e(x, t)=\frac{1}{2}|\nabla u(x, t)|^{2}+\frac{1}{2}\left|u_{t}(x, t)\right|^{2}+\frac{1}{p+1}|u(x, t)|^{p+1}.
$$
We use $u_{r}, \nabla\mkern-13mu/ u$ for the derivative in the radial direction and the covariant derivative on the sphere centred at the origin, respectively;
$$
u_{r}(x, t)=\frac{x}{|x|} \cdot \nabla u(x, t) ; \quad {\nabla\mkern-13mu/ u}=\nabla u-u_{r} \frac{x}{|x|} ; \quad|\nabla u|^{2}=\left|u_{r}\right|^{2}+|\nabla\mkern-13mu/ u|^{2}.
$$
We also define the weighted energy $$
E_{\kappa}\left(u_{0}, u_{1}\right)=\int_{\mathbb{R}^{d}}\left(|x|^{\kappa}+1\right)\left(\frac{1}{2}\left|\nabla u_{0}(x)\right|^{2}+\frac{1}{2}\left|u_{1}(x)\right|^{2}+\frac{1}{p+1}\left|u_{0}(x)\right|^{p+1}\right) d x .
$$
In this work $\sigma_{R}$ represents the regular measure of the sphere $\left\{x \in \mathbb{R}^{d}:|x|=R\right\}$. We also define $c_{d}$ to be the area of the united sphere $\mathbb{S}^{d-1}$. Thus we have the following identities for any radial function $f(x)$
$$
\int_{|x|=r} f(x) d \sigma_{r}(x)=c_{d} r^{d-1} f(r) ; \quad \int_{\mathbb{R}^{d}} f(x) d x=c_{d} \int_{0}^{\infty} f(r) r^{d-1} d r .
$$
The notation $A\lesssim B$ means that there exists a constant $c$, so that the inequality $A\leq c B$ holds. We may also put subscript($s$) to indicate that the constant $c$ depends on the given subscript($s$) but nothing else. In particular, the symbol $\lesssim_{1}$ is used if $c$ is an absolute constant.
\begin{lemma}
(Pointwise Estimate) Assume $d\geq 3$. all radial $\dot{H}^{1}\left(\mathbb{R}^{d}\right)$ functions $u$ satisfy$$
|u(r)| \lesssim_{d}   r^{-\frac{d-2}{2}}\|u\|_{\dot{H}^{1}}, \quad r>0.
$$If $u$ also satisfy $u \in L^{p+1}\left(\mathbb{R}^{d}\right)$, then its decay is stronger as $r \rightarrow+\infty$.$$
|u(r)|\lesssim_{d}     r^{-\frac{2(d-1)}{p+3}}\|u\|_{\dot{H}^{1}}^{\frac{2}{p+3}}\|u\|_{L^{p+1}}^{\frac{p+1}{p+3}}, \quad r>0.
$$\end{lemma}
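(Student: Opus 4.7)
The plan is to derive both bounds from the fundamental theorem of calculus along the radial direction, combined with a Cauchy--Schwarz estimate that distributes the natural weight $s^{d-1}$ between the two factors. By density of smooth compactly supported radial functions in $\dot{H}^{1}\cap L^{p+1}$, I may assume $u$ is smooth and vanishes for large $s$, so that $u(r)=-\int_{r}^{\infty} u'(s)\,ds$ holds and there are no boundary terms at infinity.

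For the first bound, I insert the weight $s^{(d-1)/2}$ and apply Cauchy--Schwarz to get
\begin{equation*}
|u(r)|^{2}\leq \left(\int_{r}^{\infty}|u'(s)|^{2}s^{d-1}\,ds\right)\left(\int_{r}^{\infty} s^{-(d-1)}\,ds\right)\lesssim_{d} \|u\|_{\dot{H}^{1}}^{2}\cdot \frac{r^{-(d-2)}}{d-2},
\end{equation*}
using the radial identity $\|u\|_{\dot{H}^{1}}^{2}=c_{d}\int_{0}^{\infty}|u'(s)|^{2}s^{d-1}\,ds$ and the assumption $d\geq 3$ for integrability at infinity. Taking square roots gives the first inequality.

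For the second bound, the key move is to apply the fundamental theorem to $|u|^{(p+3)/2}$ rather than to $u$. Since $(p+3)/2>1$, one has $\frac{d}{ds}|u|^{(p+3)/2}=\frac{p+3}{2}|u|^{(p-1)/2}u\,u'$, whence
\begin{equation*}
|u(r)|^{(p+3)/2}\leq \tfrac{p+3}{2}\int_{r}^{\infty}|u(s)|^{(p+1)/2}|u'(s)|\,ds.
\end{equation*}
I then pair $|u|^{(p+1)/2}s^{-(d-1)/2}$ with $|u'|s^{(d-1)/2}$ under Cauchy--Schwarz; the second factor integrates to a constant multiple of $\|u\|_{\dot{H}^{1}}$, while for the first I exploit that on $[r,\infty)$ the inequality $s^{-(d-1)}\leq r^{-2(d-1)}\,s^{d-1}$ holds, giving
\begin{equation*}
\int_{r}^{\infty}|u|^{p+1}s^{-(d-1)}\,ds\leq r^{-2(d-1)}\int_{r}^{\infty}|u|^{p+1}s^{d-1}\,ds\lesssim_{d} r^{-2(d-1)}\|u\|_{L^{p+1}}^{p+1}.
\end{equation*}
Combining the two factors I obtain $|u(r)|^{(p+3)/2}\lesssim_{d} r^{-(d-1)}\|u\|_{L^{p+1}}^{(p+1)/2}\|u\|_{\dot{H}^{1}}$, and raising to the $2/(p+3)$-th power produces exactly the claimed inequality.

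The only genuine technical point is justifying that $|u(s)|^{(p+3)/2}\to 0$ as $s\to\infty$, so that no boundary term appears at infinity in the second identity. For smooth compactly supported approximations this is immediate, and the final bound extends to all radial $u\in \dot{H}^{1}\cap L^{p+1}$ by continuity of both sides in that norm; alternatively, the first estimate of the lemma already shows the vanishing once $\|u\|_{\dot{H}^{1}}$ is finite.
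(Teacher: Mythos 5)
Your proof is correct and follows essentially the standard argument: the paper itself omits the proof of this lemma and refers to Kenig--Merle and Shen, where the same two steps are used, namely the fundamental theorem of calculus in the radial variable combined with Cauchy--Schwarz against the weight $s^{(d-1)/2}$, applied to $u$ for the first bound and to $|u|^{(p+3)/2}$ for the second. The exponent bookkeeping and the pointwise inequality $s^{-(d-1)}\leq r^{-2(d-1)}s^{d-1}$ for $s\geq r$ both check out, so nothing further is needed.
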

This lemma has been known for many years, more details can be found, for example, in C.E.Kenig and F.Merle \cite{ref4} for $d=3$ and R.Shen \cite{ref9} for $d\geq3$.
\begin{lemma}
(Radiation Field) Assume that $d\geq 3$ and let $u$ be a solution to the free wave equation $\partial_{t}^{2} u-\Delta u=0$ with initial data $\left(u_{0}, u_{1}\right) \in H^{1} \times L^{2}\left(\mathbb{R}^{d}\right)$. Then$$
\lim _{t \rightarrow+\infty} \int_{\mathbb{R}^{d}}\left(|\nabla u(x, t)|^{2}-\left|u_{r}(x, t)\right|^{2}+\frac{|u(x, t)|^{2}}{|x|^{2}}\right) d x=0,
$$and there exists a unique function $G_{+} \in L^{2}\left(\mathbb{R} \times \mathbb{S}^{d-1}\right)$ such that$$
\lim _{t \rightarrow+\infty} \int_{0}^{\infty} \int_{\mathbb{S}^{d-1}}\left|r^{\frac{d-1}{2}} \partial_{t} u(r \theta, t)-G_{+}(r-t, \theta)\right|^{2} d \theta d r=0;
$$$$
\lim _{\iota \rightarrow+\infty} \int_{0}^{\infty} \int_{\mathbb{S}^{d-1}}\left|r^{\frac{d-1}{2}} \partial_{r} u(r \theta, t)+G_{+}(r-t, \theta)\right|^{2} d \theta d r=0.
$$
In addition, the map$$
\left(u_{0}, u_{1}\right) \rightarrow \sqrt{2} G_{+}
$$$$\quad
\dot{H}^{1} \times L^{2}\left(\mathbb{R}^{d}\right) \rightarrow L^{2}\left(\mathbb{R} \times \mathbb{S}^{d-1}\right)
$$ 
is a bijective isometry.
\end{lemma}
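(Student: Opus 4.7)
The plan is to first prove the result for a dense class of smooth, compactly supported initial data $(u_0, u_1) \in C_c^\infty \times C_c^\infty$, and then extend to all of $\dot{H}^1 \times L^2$ by the isometry itself. Motivated by the reduction used in Section 1.2, set $w(r,\theta,t) = r^{(d-1)/2} u(r\theta,t)$; a direct computation shows
$$w_{tt} - w_{rr} = \frac{1}{r^2} \Delta_{\mathbb{S}^{d-1}} w - \frac{(d-1)(d-3)}{4 r^2} w,$$
so $w$ is, up to a remainder of order $1/r^2$, a one-dimensional free wave. Finite speed of propagation confines the relevant contributions to $|r-t| \lesssim R$, so $r \sim t$ there and the remainder is $O(1/t^2)$, which is time-integrable.

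For the construction of $G_+$, fix $s \in \mathbb{R}$ and $\theta \in \mathbb{S}^{d-1}$ and follow the outgoing characteristic $r = t + s$. The identity
$$\frac{d}{dt}\bigl[(w_t - w_r)(t+s,\theta,t)\bigr] = (w_{tt} - w_{rr})(t+s,\theta,t)$$
shows that $w_t - w_r$ is nearly conserved along this line, with deviation controlled by $1/t^2$. The limit
$$-2 G_+(s,\theta) := \lim_{t \to +\infty} (w_t - w_r)(t+s,\theta,t)$$
therefore exists pointwise, and integrating the Cauchy difference of $w_t - w_r$ over $\mathbb{R} \times \mathbb{S}^{d-1}$ upgrades this to $L^2$-convergence. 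A parallel argument along the same characteristic shows $w_t + w_r \to 0$ in $L^2$, because the incoming profile $g(r+t)$ is pushed out to $+\infty$ in the variable $r+t$ while the data have compact support. Translating back via $w_t = r^{(d-1)/2} u_t$ and $w_r = r^{(d-1)/2} u_r + \tfrac{d-1}{2} r^{(d-3)/2} u$, and noting the lower-order $r^{(d-3)/2} u$ term is negligible in $L^2_{r,\theta}$ thanks to Lemma 2.1, produces the two displayed limits for $r^{(d-1)/2} \partial_t u$ and $r^{(d-1)/2} \partial_r u$.

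The first displayed limit, $\int(|\nabla u|^2 - |u_r|^2 + |u|^2/|x|^2)\,dx \to 0$, is best handled by a spherical harmonic decomposition $u(r\theta,t) = \sum_{n,k} u_{n,k}(r,t) Y_{n,k}(\theta)$: each mode reduces to a 1D wave equation with an attractive/repulsive $r^{-2}$ potential $[n(n+d-2) + (d-1)(d-3)/4]/r^2$, and mode-by-mode characteristic analysis shows that both $|\nabla\mkern-13mu/ u|^2$ and $|u|^2/|x|^2$ vanish in the limit, with summation justified by the uniform energy bound. For the isometry, energy conservation gives
$$\|(u_0, u_1)\|_{\dot{H}^1 \times L^2}^2 = \int\bigl(|u_r|^2 + |u_t|^2 + |\nabla\mkern-13mu/ u|^2\bigr)\,dx \xrightarrow{t \to +\infty} \int\bigl(|u_r|^2 + |u_t|^2\bigr)\,dx,$$
and rewriting the right-hand side as $\tfrac{1}{2}\int_0^\infty\!\int_{\mathbb{S}^{d-1}}\bigl((w_t - w_r)^2 + (w_t + w_r)^2\bigr)\,d\theta\,dr$ and applying the previous step yields $\|(u_0,u_1)\|_{\dot{H}^1 \times L^2}^2 = 2\|G_+\|_{L^2(\mathbb{R}\times\mathbb{S}^{d-1})}^2$. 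Hence $(u_0, u_1) \mapsto \sqrt{2}\,G_+$ is a linear isometry; surjectivity follows by reversing the construction, i.e., any $G_+ \in L^2$ prescribes a purely outgoing 1D profile from which initial data at $t = 0$ can be recovered.

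The main obstacle is upgrading the mode-by-mode and characteristic analysis into genuine $L^2$-convergence uniformly over angular variables, especially in the non-radial case where the angular Laplacian is not a small perturbation but a bona fide operator. The key is to exploit the uniform energy bound, together with a density argument that lets one truncate the spherical harmonic expansion at any prescribed level, handle the low modes by direct characteristic analysis for smooth data, and control the tail by the conserved energy.
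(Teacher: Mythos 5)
The paper does not actually prove this lemma: it is quoted as a known result, with the reader referred to Friedlander \cite{ref11,ref12} and to Duyckaerts--Kenig--Merle \cite{ref13} for a proof valid in all dimensions $d\geq 3$. So your attempt can only be measured against those standard proofs. Your outline follows the classical characteristic-line strategy, and the reduction $w=r^{(d-1)/2}u$, the identity $w_{tt}-w_{rr}=r^{-2}\Delta_{\mathbb{S}^{d-1}}w-\tfrac{(d-1)(d-3)}{4}r^{-2}w$, and the density/isometry scheme are all the right skeleton. But as written there are genuine gaps.

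First, the localization step is incorrect for even $d$: finite speed of propagation only gives support in $|x|\leq t+R$, not concentration in $|r-t|\lesssim R$; the strong Huygens principle fails in even dimensions, the solution has a tail throughout the interior of the cone, and you cannot conclude $r\sim t$ on the support. Along a fixed outgoing characteristic $r=t+s$ the factor $r^{-2}$ is integrable in $t$ regardless, but the error term also contains $r^{-2}\Delta_{\mathbb{S}^{d-1}}w$, which is not controlled by the energy; you are implicitly using two extra angular derivatives of the smooth dense class, and you must say explicitly that the quantitative conclusion transferred to general data is only the isometry identity (otherwise the density passage is circular). Second, the first displayed limit, namely $\int(|\nabla\mkern-13mu/ u|^{2}+|u|^{2}/|x|^{2})\,dx\to 0$, is the part of the lemma that genuinely needs an argument, and ``mode-by-mode characteristic analysis shows that both vanish'' is an assertion rather than a proof: Hardy's inequality only bounds $\int_0^\infty |w|^{2}r^{-2}\,dr$ by the conserved energy, it does not make it vanish, and in even dimensions defeating the interior tail is exactly the difficulty. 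Third, surjectivity of $(u_0,u_1)\mapsto\sqrt{2}\,G_{+}$ does not follow by ``reversing the construction,'' since the construction is a $t\to+\infty$ limit and there is no characteristic initial-value problem posed at null infinity in your argument; the standard route is to exhibit a dense class of attained profiles and then use the already-established isometry to conclude that the range is closed. Also, $w_t+w_r$ is transported along incoming characteristics $r+t=\mathrm{const}$, not along the outgoing one you use for $w_t-w_r$; the ``pushed out to infinity'' heuristic is correct but should be run along the correct family of lines.
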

This result was known many years ago, please see Friedlander \cite{ref11,ref12}. Duyckaerts-Kenig-Merle \cite{ref13} gives a proof for all dimensions $d\geq3$.
\section{Morawetz identity and Morewetz inequality}
\subsection{Morawetz identity}
\begin{proposition}
(Morawetz identity) Let $u$ be a solution to (CP1) with a finite energy E. Then the following identity holds for any $R>0$ and time $t_{1}<t_{2}$.
$$
\begin{aligned}
& \frac{1}{2 R} \int_{t_{1}}^{t_{2}} \int_{|x|<R}\left(|\nabla u|^{2}+\left|u_{t}\right|^{2}+\frac{(d-1)(p-1)-2}{p+1}|u|^{p+1}\right) d x d t+\frac{d-1}{4 R^{2}} \int_{t_{1}}^{t_{2}} \int_{|x|=R}|u|^{2} d \sigma_{R}(x) d t \\
+& \int_{t_{1}}^{t_{2}} \int_{|x|>R}\left(\frac{|\nabla\mkern-13mu/ u|^{2}}{|x|}+\frac{(d-1)(p-1)}{2(p+1)} \cdot \frac{|u|^{p+1}}{|x|}+\frac{(d-3)(d-1)}{4} \cdot \frac{|u|^{2}}{|x|^{3}}\right) d x d t \\
+&\left.\sum_{i=1,2} \int_{|x|<R}\left(\frac{R^{2}-|x|^{2}}{2 R^{2}}\left|u_{r}\right|^{2}+\frac{1}{2}\left|\frac{|x|}{R} u_{r}+\frac{(d-1) u}{2 R}+(-1)^{i} u_{t}\right|^{2}+\frac{\left(d^{2}-1\right)|u|^{2}}{8 R^{2}}+\frac{|\nabla\mkern-13mu/ u |^{2}}{2}+\frac{|u|^{p+1}}{p+1}\right)\right|_{t=t_{i}} d x \\
+&\left.\sum_{i=1,2} \int_{|x|>R}\left(\frac{1}{2}\left|u_{r}+\frac{d-1}{2} \frac{u}{|x|}+(-1)^{i} u_{t}\right|^{2}+\frac{|\nabla\mkern-13mu/ u|^{2}}{2}+\frac{|u|^{p+1}}{p+1}+\frac{(d-1)(d-3)\left|u\left(x, t_{i}\right)\right|^{2}}{8|x|^{2}}\right)\right|_{t=t_{i}} d x=2 E .
\end{aligned}
$$
\end{proposition}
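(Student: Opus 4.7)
The plan is to multiply the equation $u_{tt}-\Delta u+|u|^{p-1}u=0$ by a radial Morawetz multiplier of the form
$$M[u] = \psi(|x|)\,u_r + \frac{(d-1)\,\psi(|x|)}{2|x|}\,u, \qquad \psi(r):=\min\bigl\{r/R,\,1\bigr\},$$
and to integrate over the space--time slab $[t_1,t_2]\times\mathbb{R}^{d}$. The cutoff $\psi$ is designed so as to interpolate exactly between the ``interior weight $1/R$'' visible in the bulk integrand on $|x|<R$ and the ``classical exterior weight $1/|x|$'' visible on $|x|>R$. Writing the resulting integrand as $\partial_t Q+\nabla\cdot P+F$, the bulk term $F$ is, region by region, precisely the Morawetz-positive quantity one wants: on $|x|>R$ (where $\psi\equiv 1$) one recovers the familiar $\frac{|\nabla\mkern-13mu/ u|^2}{|x|}$, $\frac{(d-1)(p-1)}{2(p+1)}\frac{|u|^{p+1}}{|x|}$ and $\frac{(d-1)(d-3)}{4}\frac{|u|^2}{|x|^3}$, while on $|x|<R$ (where $\psi'(r)=1/R$) one recovers the interior quantity with prefactor $\frac{1}{2R}$. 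The jump of $\psi'$ at $r=R$, treated distributionally, produces a surface contribution which, after integrating by parts against $\Delta u$, yields precisely the boundary term $\frac{d-1}{4R^{2}}\int_{|x|=R}|u|^{2}\,d\sigma_R$.

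The spatial divergence $\nabla\cdot P$ integrates to zero at infinity by a standard density argument: establish the identity for $(u_0,u_1)\in C_c^{\infty}$, then pass to the limit using finite propagation speed together with conservation of energy. What remains are the two time-slice contributions $-\int Q(\cdot,t_1)\,dx$ and $\int Q(\cdot,t_2)\,dx$, which I combine with $\sum_{i=1,2}\int e(x,t_i)\,dx=2E$ using energy conservation. Algebraic rearrangement then completes squares: outside $B_R$ the kinetic, radial-derivative and Hardy-type pieces assemble into $\frac{1}{2}\bigl|u_r+\frac{d-1}{2}\frac{u}{|x|}+(-1)^i u_t\bigr|^{2}$, and inside $B_R$ the analogous cross terms assemble into $\frac{1}{2}\bigl|\frac{|x|}{R}u_r+\frac{(d-1)u}{2R}+(-1)^i u_t\bigr|^{2}$; the leftover positive coefficients $\frac{R^{2}-|x|^{2}}{2R^{2}}|u_r|^{2}$ and $\frac{d^{2}-1}{8R^{2}}|u|^{2}$ appear because the interior multiplier is not perfectly tuned to the full $\dot H^{1}$ density, but simply record whatever coercivity is left over from the square completion.

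The main obstacle is algebraic rather than conceptual: one must verify that the Hardy-type boundary contributions produced by integration by parts on the zero-order piece of $M$, involving $\nabla\cdot\bigl(\frac{(d-1)\psi}{2|x|}\frac{x}{|x|}\bigr)$, exactly cancel the residual cross terms and leave the displayed perfect squares, and that no spurious contribution survives on $|x|=R$. Once the multiplier is fixed, this is bookkeeping: track coefficients separately on each side of the sphere $|x|=R$, and check that the two surface contributions from the divergence theorem (applied separately on $B_R$ and its complement) combine, together with the distributional derivative of $\psi$, into the single boundary integral $\frac{d-1}{4R^{2}}\int_{|x|=R}|u|^{2}\,d\sigma_R$ stated in the proposition.
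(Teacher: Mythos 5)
Your proposal is correct and follows essentially the same route as the paper: your multiplier $\psi(|x|)u_r+\frac{(d-1)\psi(|x|)}{2|x|}u$ with $\psi(r)=\min\{r/R,1\}$ is exactly $\frac{1}{R}$ times the Perthame--Vega multiplier $\nabla u\cdot\nabla\Psi+u(\frac{\Delta\Psi}{2}-\varphi)$ used in the paper, the surface term at $|x|=R$ arises there from the same distributional integration by parts on the zero-order coefficient, and the square-completion against $2E$ at the time slices and the cutoff/finite-speed density argument for general finite-energy data are identical. No substantive difference to report.
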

\begin{proof} we follow a similar argument to the given by Perthame and Vega in the final section of their work \cite{ref20}. Let us first consider solutions with compact support. Given a positive constant $R$, we define two radical function $\Psi$ and $\varphi$ by
$$
\nabla \Psi=\left\{\begin{array}{ll}
x, & \text { if }|x| \leq R ; \\
R x /|x|, & \text { if }|x| \geq R ;
\end{array} \quad \varphi=\left\{\begin{array}{ll}
1 / 2, & \text { if }|x| \leq R ;\\
0, & \text { if }|x|>R.
\end{array}\right.\right. 
$$
Since $u$ is defined for all time $t$, we may also define a function on $R$
$$
\mathcal{E}(t)=\int_{\mathbb{R}^{d}} u_{t}(x, t)\left(\nabla u(x, t) \cdot \nabla \Psi+u(x, t)\left(\frac{\Delta \Psi}{2}-\varphi\right)\right) d x. \eqno{(1)}
$$
We may differentiate $\mathcal{E}$, utilize the equation $$
u_{t t}-\Delta u=-|u|^{p-1} u ,
$$apply integration by parts and obtain
$$
\begin{aligned}
-\mathcal{E}^{\prime}(t)=& \int_{\mathbb{R}^{d}}\left(\sum_{i, j=1}^{d} u_{i} \Psi_{i j} u_{j}-\varphi|\nabla u|^{2}+\varphi\left|u_{t}\right|^{2}\right) d x+\frac{1}{4} \int_{\mathbb{R}^{d}} \nabla\left(|u|^{2}\right) \cdot \nabla(\Delta \Psi-2 \varphi) d x \\
&+\int_{\mathbb{R}^{d}}|u|^{p+1}\left(\frac{p-1}{2(p+1)} \Delta \Psi-\varphi\right) d x \\
=& I_{1}+I_{2}+I_{3}.
\end{aligned}\eqno{(2)}
$$
Here we have
$$
\begin{array}{l}
\Psi_{i j}=\left\{\begin{array}{ll}
\delta_{i j}, & \text { if }|x|<R ; \\
\frac{R \delta_{i j}}{|x|}-\frac{R x_{i} x_{j}}{|x|^{3}}, & \text { if }|x|>R ;
\end{array} \quad \Delta \Psi=\left\{\begin{array}{ll}
d, & \text { if }|x|<R ; \\
R (d-1)/|x|, & \text { if }|x|>R ;
\end{array}\right.\right. \\\\
\Delta \Psi-2 \varphi=\left\{\begin{array}{ll}
d-1, & \text { if }|x| \leq R ; \\
R (d-1)/|x|, & \text { if }|x| \geq R ;
\end{array} \in C\left(\mathbb{R}^{d}\right)\right..
\end{array}
$$
When $|x|>R$, we may calculate$$
\sum_{i, j=1}^{d} u_{i} \Psi_{i j} u_{j}=\sum_{i, j=1}^{d} u_{i}\left(\frac{R \delta_{i j}}{|x|}-\frac{R x_{i} x_{j}}{|x|^{3}}\right) u_{j}=\frac{R}{|x|}|\nabla u|^{2}-\frac{R|\nabla u \cdot x|^{2}}{|x|^{3}}=\frac{R}{|x|}|\nabla\mkern-13mu/ u|^{2}.\eqno{(3)}
$$
Thus we have$$
I_{1}=\frac{1}{2} \int_{|x|<R}\left(|\nabla u|^{2}+\left|u_{t}\right|^{2}\right) d x+R \int_{|x|>R} \frac{\left.|\nabla\mkern-13mu/ u\right|^{2}}{|x|} d x.
\eqno{(4)}$$
The last term in the equality above, a basic computation shows$$
I_{3}=\frac{(d-1)(p-1)-2}{2(p+1)} \int_{|x|<R}|u|^{p+1} d x+\frac{(p-1)(d-1) R}{2(p+1)} \int_{|x|>R} \frac{|u|^{p+1}}{|x|} d x.
\eqno{(5)}$$
Let us calculate the left hand carefully
$$
\begin{aligned}
I_{2} &=\frac{1}{4} \int_{\mathbb{R}^{d}} \nabla\left(|u|^{2}\right) \cdot \nabla(\Delta \Psi-2 \varphi) d x \\
&=\frac{1}{4} \int_{|x|>R} \nabla\left(|u|^{2}\right) \cdot \frac{-R(d-1) x}{|x|^{3}} d x \\
&=\frac{1}{4} \int_{|x|>R}\left[\operatorname{div}\left(|u|^{2} \cdot \frac{-R(d-1) x}{|x|^{3}}\right)+(d-3)(d-1) \frac{R}{|x|^{3}}|u|^{2}\right] d x \\
&=\frac{d-1}{4 R} \int_{|x|=R}|u|^{2} d \sigma_{R}(x)+\frac{(d-3)(d-1)}{4} \int_{|x|>R} \frac{|u|^{2}}{|x|^{3}} d x.
\end{aligned}
\eqno{(6)}
$$
Since$-\mathcal{E}^{\prime}(t)=I_{1}+I_{2}+I_{3}$, we have\\
$$\int_{t_{1}}^{t_{2}}\left(I_{1}+I_{2}+I_{3}\right) d t=\mathcal{E}\left(t_{1}\right)-\mathcal{E}\left(t_{2}\right).\eqno{(7)}
$$\\
We rewrite in the form of
$$
\begin{aligned}
R \mathcal{E}\left(t_{1}\right)=& \int_{\mathbb{R}^{d}} R u_{t}(x, t)\left(\nabla u(x, t) \cdot \nabla \Psi+u(x, t)\left(\frac{\Delta \Psi}{2}-\varphi\right)\right) d x \\
=& \frac{1}{2} \int_{\mathbb{R}^{d}}\left(R^{2}\left|u_{t}\left(x, t_{1}\right)\right|^{2}+\left|\nabla u\left(x, t_{1}\right) \cdot \nabla \Psi+u\left(x, t_{1}\right)\left(\frac{\Delta \Psi}{2}-\varphi\right)\right|^{2}\right) d x \\
& \quad-\frac{1}{2} \int_{\mathbb{R}^{d}}\left|\nabla u\left(x, t_{1}\right) \cdot \nabla \Psi+u\left(x, t_{1}\right)\left(\frac{\Delta \Psi}{2}-\varphi\right)-R u_{t}\left(x, t_{1}\right)\right|^{2} d x \\
=& J_{1}-J_{2}.
\end{aligned}\eqno{(8)}
$$
Then we calculate $J_{1},J_{2}$ 
$$
\begin{aligned}
J_{1} &=\frac{1}{2} \int_{\mathbb{R}^{d}}\left(R^{2}\left|u_{t}\right|^{2}+|\nabla u \cdot \nabla \Psi|^{2}+\left(\frac{\Delta \Psi}{2}-\varphi\right) \nabla\left(|u|^{2}\right) \cdot \nabla \Psi+\left(\frac{\Delta \Psi}{2}-\varphi\right)^{2}|u|^{2}\right) d x \\
&=\frac{1}{2} \int_{\mathbb{R}^{d}}\left[R^{2}\left|u_{t}\right|^{2}+|\nabla u \cdot \nabla \Psi|^{2}-\operatorname{div}\left(\left(\frac{\Delta \Psi}{2}-\varphi\right) \nabla \Psi\right)|u|^{2}+\left(\frac{\Delta \Psi}{2}-\varphi\right)^{2}|u|^{2}\right] d x.
\end{aligned}\eqno{(9)}
$$
A basic calculation  shows$$
\operatorname{div}\left(\left(\frac{\Delta \Psi}{2}-\varphi\right) \nabla \Psi\right)=\left\{\begin{array}{ll}
d(d-1)/2, & \text { if } |x|<R ;\\
\frac{(d-1)(d-2)R^2}{2|x|^2}, & \text { if } |x|>R.
\end{array}\right.\eqno{(10)}
$$
Thus we have
$$
\begin{aligned}
J_{1}=& \frac{1}{2} \int_{|x|<R}\left[R^{2}\left|u_{t}\right|^{2}+|x \cdot \nabla u|^{2}+\frac{1-d^{2}}{4}|u|^{2}\right] d x \\
&+\frac{1}{2} \int_{|x|>R}\left[R^{2}\left|u_{t}\right|^{2}+R^{2}\left|u_{r}\right|^{2}+\frac{R^{2}(d-1)(3-d)|u|^{2}}{4|x|^{2}}\right] d x \\
=& R^{2} E-R^{2} \int_{|x|<R}\left[\frac{R^{2}-|x|^{2}}{2 R^{2}}\left|u_{r}\right|^{2}+\frac{d^{2}-1}{8 R^{2}}|u|^{2}+\frac{1}{2}|\nabla\mkern-13mu/ u|^{2}+\frac{1}{p+1}|u|^{p+1}\right] d x \\
&+\frac{(d-1)(3-d) R^{2}}{8} \int_{|x|>R} \frac{|u|^{2}}{|x|^{2}} d x-R^{2} \int_{|x|>R}\left(\frac{1}{2}|\nabla\mkern-13mu/ u|^{2}+\frac{1}{p+1}|u|^{p+1}\right) d x.
\end{aligned}\eqno{(11)}
$$
In addition we have
$$
J_{2}=\frac{1}{2} \int_{|x|<R}\left|x \cdot \nabla u+\frac{d-1}{2} u-R u_{t}\right|^{2} d x+\frac{R^{2}}{2} \int_{|x|>R}\left|\frac{x}{|x|} \cdot \nabla u+\frac{d-1}{2}\frac{u}{|x|}-u_{t}\right|^{2} d x. \eqno{(12)}
$$
Combining $J_{1},J_{2}$, we obtain
$$\begin{aligned}
R \mathcal{E}\left(t_{1}\right)=& R^{2} E-R^{2} \int_{|x|>R}\left(\frac{1}{2}\left|u_{r}+\frac{d-1}{2}\frac{u}{|x|}-u_{t}\right|^{2}+\frac{\left.|\nabla\mkern-13mu/ u\right|^{2}}{2}+\frac{|u|^{p+1}}{p+1}+\frac{(d-1)(d-3)}{8}\frac{|u|^{2}}{|x|^{2}}\right) d x\\
& -R^{2} \int_{|x|<R}\left[\frac{R^{2}-|x|^{2}}{2 R^{2}}\left|u_{r}\right|^{2}+\frac{1}{2}\left|\frac{|x|}{R} u_{r}+\frac{d-1}{2 R}u-u_{t}\right|^{2}+\frac{\left.|\nabla\mkern-13mu/ u\right|^{2}}{2}+\frac{|u|^{p+1}}{p+1}+\frac{(d^2-1)|u|^{2}}{8 R^{2}}\right] d x.
\end{aligned}
$$
Finally we find a similar expression of $-R \mathcal{E}\left(t_{2}\right)$
$$
\begin{aligned}
-R \mathcal{E}\left(t_{2}\right)=& R^{2} E-R^{2} \int_{|x|>R}\left(\frac{1}{2}\left|u_{r}+\frac{(d-1)u}{2|x|}+u_{t}\right|^{2}+\frac{\left.|\nabla\mkern-13mu/ u\right|^{2}}{2}+\frac{|u|^{p+1}}{p+1}+\frac{(d-1)(d-3)|u|^{2}}{8|x|^{2}}\right) d x\\
& -R^{2} \int_{|x|<R}\left[\frac{R^{2}-|x|^{2}}{2 R^{2}}\left|u_{r}\right|^{2}+\frac{1}{2}\left|\frac{|x|}{R} u_{r}+\frac{(d-1)u}{2 R}+u_{t}\right|^{2}+\frac{\left.|\nabla\mkern-13mu/ u\right|^{2}}{2}+\frac{|u|^{p+1}}{p+1}+\frac{(d^2-1)|u|^{2}}{8 R^{2}}\right] d x.
\end{aligned}
$$
Then plug all the expressions of $I_{1},I_{2},I_{3}$ and $-R \mathcal{E}\left(t_{2}\right),-R \mathcal{E}\left(t_{2}\right)$ in to the integral identity to finish the proof if $u$ is compactly supported. In order to deal with the general solution $u$, we fix a smooth radial cut-off function$\phi: \mathbb{R}^{2} \rightarrow[0,1]$ so that$$
\phi(x)=\left\{\begin{array}{ll}
1, & \text { if }|x| \leq 1 ;\\
0, & \text { if }|x|>2;
\end{array}\right.
$$
define initial data $\left(u_{0, R^{\prime}}(x), u_{1, R^{\prime}}(x)\right)=\phi\left(x / R^{\prime}\right)\left(u\left(x, t_{1}\right), u_{t}\left(x, t_{1}\right)\right)$ and consider the corresponding solution $u_{R^{\prime}}$ to (CP1). The argument above shows that $u_{R^{\prime}}$ satisfies Morawetz identity. We observe\\
1) The identity $u_{R{\prime}}(x,t)=u(x,t)$ holds if $|x|<R^{\prime}+t_1-t$ by finite speed of propagation;\\
2) $E(u_{0,R^{\prime}},u_{1,R^{\prime}})\to E$ as $R^{\prime}\to\infty$.\\
3) The energies of $u_{R{\prime}}$ and $u$ in the region where $u_{R^{\prime}} \neq u$ both converge to zero as $R^{\prime}\to+\infty$ by finite speed of propagation and energy conservation law.\\
These facts enable us to take the limit $R^{\prime}\to+\infty$ and prove Morawetz identity for general solutions $u$.
\end{proof}
\subsection{Morawetz inequalities}
A combination of Morawetz identity and finite speed of propagation gives a few useful inequalities, which is the main tool of this paper. The key observation here is that if $R$ is large, the first term in the Morawetz identity is almost $2E$ when $t_{1}\le-R$ and $t_{2}\le R$, thus all other terms must be small.
\begin{corollary}
Let $u$ be a solution to (CP1) with initial data$\left(u_{0}, u_{1}\right) \in\left(\dot{H}^{1}\left(\mathbb{R}^{d}\right) \cap L^{p+1}\left(\mathbb{R}^{d}\right)\right) \times L^{2}\left(\mathbb{R}^{d}\right)$. Given any $R>0,r\ge0$ we have$$
\sum_{j=1}^{6} M_{j} \leq \int_{\mathbb{R}^{d}} \min \{|x| / R, 1\}\left(\left|\nabla u_{0}\right|^{2}+\left|u_{1}\right|^{2}+\frac{2}{p+1}\left|u_{0}\right|^{p+1}\right) d x.
$$
The notations $M_{j}$ are defined by
$$
M_{1}=\frac{1}{2 R} \int_{R<|t|<R+r} \int_{|x|<R}\left(|\nabla u|^{2}+\left|u_{t}\right|^{2}+\frac{(d-1)(p-1)-2}{p+1}|u|^{p+1}\right) d x dt;\qquad\qquad\qquad\qquad\quad\qquad\qquad\qquad\qquad\quad
$$$$
M_{2}=\frac{(d-1)(p-1)-4}{2(p+1) R} \int_{-R}^{R} \int_{|x|<R}|u|^{p+1} d x d t;\qquad\qquad\qquad\qquad\qquad\qquad\qquad\qquad\qquad\qquad\qquad\qquad\qquad\quad\qquad\qquad\qquad\qquad
$$$$
M_{3}=\frac{d-1}{4 R^{2}} \int_{-R-r}^{R+r} \int_{|x|=R}|u|^{2} d \sigma_{R}(x)dt;\qquad\qquad\qquad\qquad\qquad\qquad\qquad\qquad\qquad\qquad\qquad\qquad\qquad\qquad\quad\qquad\qquad\qquad\qquad
$$$$
M_{4}=\int_{-R-r}^{R+r} \int_{|x|>R}\left(\frac{\left.|\nabla\mkern-13mu/ u\right|^{2}}{|x|}+\frac{(d-1)(p-1)}{2(p+1)}\frac{|u|^{p+1}}{|x|}+\frac{(d-3)(d-1)}{4}\frac{|u|^{2}}{|x|^{3}}\right) d xdt;\qquad\qquad\qquad\qquad\qquad\qquad\qquad\qquad
$$$$
M_{5}=\left.\sum_{\pm} \int_{|x|<R}\left(\frac{R^{2}-|x|^{2}}{2 R^{2}}\left|u_{r}\right|^{2}+\frac{1}{2}\left|\frac{|x|}{R} u_{r}+\frac{(d-1)u}{2 R} \pm u_{t}\right|^{2}+\frac{(d^2-1)|u|^{2}}{8 R^{2}}+\frac{\left.|\nabla\mkern-13mu/ u\right|^{2}}{2}+\frac{|u|^{p+1}}{p+1}\right)\right|_{t=\pm(R+r)} d x;\qquad\qquad\qquad\qquad
$$$$
M_{6}=\left.\sum_{\pm} \int_{|x|>R}\left(\frac{1}{2}\left|u_{r}+\frac{(d-1)u}{2|x|} \pm u_{t}\right|^{2}+\frac{\left.|\nabla\mkern-13mu/ u\right|^{2}}{2}+\frac{1}{p+1}|u|^{p+1}+\frac{(d-1)(d-3)}{8}\frac{|u|^{2}}{|x|^{2}}\right)\right|_{t=\pm(R+r)} d x.\qquad\qquad\qquad\qquad
$$\end{corollary}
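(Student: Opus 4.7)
The plan is to specialize the Morawetz identity to $t_1=-(R+r)$ and $t_2=R+r$, then match terms against $M_1,\dots,M_6$ and reduce the claim to a single finite-speed-of-propagation estimate. With this choice of endpoints, the second through fifth groups of terms in the identity are literally $M_3,M_4,M_5,M_6$, so these contribute verbatim.

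The only piece requiring processing is the first Morawetz term, namely $T:=\frac{1}{2R}\int_{-(R+r)}^{R+r}\int_{|x|<R}\bigl(|\nabla u|^{2}+|u_t|^{2}+\tfrac{(d-1)(p-1)-2}{p+1}|u|^{p+1}\bigr)dx\,dt$. I would split its $t$-integral into $\{R<|t|<R+r\}$ (which is exactly $M_1$) and $\{|t|<R\}$, and in the latter piece use the splitting $\tfrac{(d-1)(p-1)-2}{p+1}=\tfrac{2}{p+1}+\tfrac{(d-1)(p-1)-4}{p+1}$; the first summand recombines with $|\nabla u|^{2}+|u_t|^{2}$ into $\tfrac{1}{R}\int_{-R}^{R}\!\int_{|x|<R}e(x,t)\,dx\,dt$, while the second summand is $M_2$ on the nose. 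Thus the Morawetz identity rearranges as
$$\sum_{j=1}^{6} M_j \;=\; 2E \;-\; \frac{1}{R}\int_{-R}^{R}\int_{|x|<R} e(x,t)\,dx\,dt.$$
Writing $2E=\int_{\mathbb{R}^d}(|\nabla u_0|^2+|u_1|^2+\tfrac{2}{p+1}|u_0|^{p+1})\,dx$ and noting $1-\min\{|x|/R,1\}=\max\{(R-|x|)/R,0\}$, the desired inequality is equivalent to
$$2\int_{|x|<R}(R-|x|)\,e(x,0)\,dx \;\le\; \int_{-R}^{R}\int_{|x|<R} e(x,t)\,dx\,dt,$$
and by Fubini the left side equals $\int_{-R}^{R}\!\int_{|x|<R-|t|}e(x,0)\,dx\,dt$. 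Everything thus reduces to the pointwise-in-$t$ estimate
$$\int_{|x|<R-|t|} e(x,0)\,dx \;\le\; \int_{|x|<R} e(x,t)\,dx \qquad \text{for every } t\in[-R,R].$$

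This last estimate, an instance of finite speed of propagation for the defocusing energy density, is the only non-routine step in the argument. For $t\in[0,R]$ I would apply it to the forward light cone with vertex $(0,-R+t)$, whose time-$0$ slice is the ball $|x|<R-t$ and whose time-$t$ slice is the ball $|x|<R$. Differentiating $s\mapsto\int_{|x|<s+R-t}e(x,s)\,dx$ via $\partial_s e=\nabla\cdot(u_s\nabla u)$, the boundary integrand assembles into $\tfrac{1}{2}(u_s+u_r)^2+\tfrac{1}{2}|\nabla\mkern-13mu/ u|^{2}+\tfrac{1}{p+1}|u|^{p+1}\ge 0$, so the forward-cone energy is non-decreasing in $s$; comparing $s=0$ with $s=t$ yields the inequality. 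The case $t\in[-R,0]$ is treated symmetrically with the backward light cone based at $(0,R+t)$, where the corresponding non-negative boundary quantity is $\tfrac{1}{2}(u_s-u_r)^2+\tfrac{1}{2}|\nabla\mkern-13mu/ u|^{2}+\tfrac{1}{p+1}|u|^{p+1}$. This completes the plan.
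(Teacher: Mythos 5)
Your proposal is correct and follows essentially the same route as the paper: the same endpoints $t_1=-(R+r)$, $t_2=R+r$ in the Morawetz identity, the same splitting of its first term into $M_1+M_2$ plus the interior energy integral $\frac{1}{R}\int_{-R}^{R}\int_{|x|<R}e(x,t)\,dx\,dt$, and a finite-speed-of-propagation plus Fubini argument for what remains. The only cosmetic difference is that you control the interior energy directly, proving $\int_{|x|<R-|t|}e(x,0)\,dx\le\int_{|x|<R}e(x,t)\,dx$ by an explicit flux computation on expanding cones, whereas the paper equivalently bounds the exterior energy, $\int_{|x|>R}e(x,t)\,dx\le\int_{|x|>R-|t|}e(x,0)\,dx$; the two statements are interchangeable by energy conservation and lead to the same weight $\min\{|x|/R,1\}$ after integrating in $t$.
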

\begin{proof} We first choose $t_{1}=-R-r,t_{2}=R+r$ in the Morawetz identity,the first term above can be written as a sum of three terms$$
\frac{1}{2 R} \int_{-R-r}^{R+r} \int_{|x|<R}\left(|\nabla u|^{2}+\left|u_{t}\right|^{2}+\frac{(d-1)(p-1)-2}{p+1}|u|^{p+1}\right) d x\qquad\qquad
$$$$
=M_{1}+\frac{1}{2 R} \int_{-R}^{R} \int_{|x|<R}\left(|\nabla u|^{2}+\left|u_{t}\right|^{2}+\frac{(d-1)(p-1)-2}{p+1}|u|^{p+1}\right) d x\qquad
$$$$
=M_{1}+M_{2}+\frac{1}{2 R} \int_{-R}^{R} \int_{|x|<R}\left(|\nabla u|^{2}+\left|u_{t}\right|^{2}+\frac{2}{p+1}|u|^{p+1}\right) d x.\qquad\qquad\quad
$$
Thus$$
\frac{1}{2 R} \int_{-R}^{R} \int_{|x|<R}\left(|\nabla u|^{2}+\left|u_{t}\right|^{2}+\frac{2}{p+1}|u|^{p+1}\right) d x d t+\sum_{j=1}^{6} M_{j}=2E.
\eqno{(13)}$$
In order to prove the first inequality we only need to show
$$
\begin{aligned}
I &=2 E-\frac{1}{2 R} \int_{-R}^{R} \int_{|x|<R}\left(|\nabla u|^{2}+\left|u_{t}\right|^{2}+\frac{2}{p+1}|u|^{p+1}\right) d x d t \\
& \leq \int_{\mathbb{R}^{d}} \min \{|x| / R, 1\}\left(\left|\nabla u_{0}\right|^{2}+\left|u_{1}\right|^{2}+\frac{2}{p+1}\left|u_{0}\right|^{p+1}\right) d x.
\end{aligned}
$$
This follows energy conservation law and finite speed of propagation of energy$$
\begin{aligned}
I &=\frac{1}{R} \int_{-R}^{R} \int_{|x|>R}\left(\frac{1}{2}|\nabla u|^{2}+\frac{1}{2}\left|u_{t}\right|^{2}+\frac{1}{p+1}|u|^{p+1}\right) d x d t \\
& \leq \frac{1}{R} \int_{-R}^{R} \int_{|x|>R-|t|}\left(\frac{1}{2}\left|\nabla u_{0}\right|^{2}+\frac{1}{2}\left|u_{1}\right|^{2}+\frac{1}{p+1}\left|u_{0}\right|^{p+1}\right) d x d t \\
&=\frac{1}{R} \int_{\mathbb{R}^{d}} \min \{|x|, R\}\left(\left|\nabla u_{0}\right|^{2}+\left|u_{1}\right|^{2}+\frac{2}{p+1}\left|u_{0}\right|^{p+1}\right) d x.
\end{aligned}
$$
\end{proof}
\section{Energy Distribution}
In this section we prove Theorem1.1. It suffices to consider the positive time direction $t>0$, since the wave equation is time-reversible.\\
We choose $R=t,r=0$ in corollary 3.3. The following inequalities hold for large time $t>0$.
$$
\begin{aligned}
& \sum_{\pm} \int_{|x|<t}\left(\frac{t^{2}-|x|^{2}}{2 t^{2}}\left|u_{r}\right|^{2}+\frac{1}{2}\left|\frac{|x|}{t} u_{r}+\frac{(d-1) u}{2 t} \pm u_{t}\right|^{2}+\frac{\left(d^{2}-1\right)|u|^{2}}{8 t^{2}}+\frac{|\nabla\mkern-13mu/ u|^{2}}{2}+\frac{|u|^{p+1}}{p+1}\right) d x \\
&+\sum_{\pm} \int_{|x|>t}\left(\frac{1}{2}\left|u_{r}+\frac{(d-1) u}{2|x|} \pm u_{t}\right|^{2}+\frac{|\nabla\mkern-13mu/u|^{2}}{2}+\frac{1}{p+1}|u|^{p+1}+\frac{(d-1)(d-3)}{8} \frac{|u|^{2}}{|x|^{2}}\right) d x \\
\leq & \frac{4-(d-1)(p-1)}{2(p+1) t} \int_{-t}^{t} \int_{|x|<t}\left|u\left(x, t^{\prime}\right)\right|^{p+1} d x d t^{\prime}+\int_{\mathbb{R}^{d}} \min \{|x| / R, 1\}\left(\left|\nabla u_{0}\right|^{2}+\left|u_{1}\right|^{2}+\frac{2}{p+1}\left|u_{0}\right|^{p+1}\right) d x .
\end{aligned}\eqno{(14)}
$$
We observe that
$$
|\nabla u|^{2}+\left|u_{t}\right|^{2} \lesssim_{1}\left|u_{r}\right|^{2}+\left|\frac{|x|}{t} u_{r}+\frac{(d-1)u}{2 t}+u_{t}\right|^{2}+\frac{d^{2}-1}{8}\frac{|u|^{2}}{t^{2}}+\left.|\nabla\mkern-13mu/ u\right|^{2}
$$
and$$
\left|u_{r}+u_{t}\right|^{2} \lesssim_{1} \frac{(t-|x|)^{2}}{t^{2}}\left|u_{r}\right|^{2}+\left|\frac{|x|}{t} u_{r}+\frac{(d-1)u}{2 t}+u_{t}\right|^{2}+\frac{d^{2}-1}{8}\frac{|u|^{2}}{t^{2}}, \quad \text { if }\,\,|x|<t ;
$$$$
\left|u_{r}+u_{t}\right|^{2} \lesssim_{1}\left|u_{r}+\frac{(d-1)u}{2|x|}+u_{t}\right|^{2}+\frac{(d-1)(d-3)}{8}\frac{|u|^{2}}{|x|^{2}},\qquad\qquad \qquad \text { if }\,\,|x|>t .
$$
Then obtain
$$
\frac{1}{p+1} \int_{\mathbb{R}^{d}}\left(|u(x, t)|^{p+1}+|u(x,-t)|^{p+1}\right) d x+c_{1} \sum_{\pm} \int_{|x|<t} \frac{t-|x|}{t}\left(|\nabla u(x, \pm t)|^{2}+\left|u_{t}(x, \pm t)\right|^{2}\right) d x
$$$$
+c_{2} \sum_{\pm} \int_{\mathbb{R}^{d}}\left(\left|\left(u_{r} \pm u_{t}\right)(x, \pm t)\right|^{2}+|\nabla\mkern-13mu/ u(x, \pm t)|^{2}\right) d x\qquad\qquad\qquad\qquad\qquad\qquad
$$$$
\leq \frac{4-(d-1)(p-1)}{2(p+1) t} \int_{-t}^{t} \int_{|x|<t}\left|u\left(x, t^{\prime}\right)\right|^{p+1} d x d t^{\prime}+2 \int_{\mathbb{R}^{d}} \min \{|x| / t, 1\} e(x, 0) d x.
\qquad\qquad\qquad$$
Here $c_{1},c_{2}>0$ are absolute constants. For convenience we introduce the notation
$$
\begin{aligned}
Q(t)=& \frac{1}{p+1} \int_{\mathbb{R}^{d}}\left(|u(x, t)|^{p+1}+|u(x,-t)|^{p+1}\right) d x+c_{1} \sum_{\pm} \int_{|x|<t} \frac{t-|x|}{t}\left(|\nabla u(x, \pm t)|^{2}+\left|u_{t}(x, \pm t)\right|^{2}\right) d x \\
& +c_{2} \sum_{\pm} \int_{\mathbb{R}^{d}}\left(\left|\left(u_{r} \pm u_{t}\right)(x, \pm t)\right|^{2}+|\nabla\mkern-13mu/ u(x, \pm t)|^{2}\right) d x.
\end{aligned}
$$
Then the inequality above implies that Q(t) satisfies the recurrence formula$$
Q(t) \leq \frac{\lambda}{t} \int_{0}^{t} Q\left(t^{\prime}\right) d t^{\prime}+2 \int_{\mathbb{R}^{d}} \min \{|x| / t, 1\} e(x, 0) d x.
$$
Here$$
0<\lambda=\frac{4-(d-1)(p-1)}{2}<1.
$$\\
\textbf{Proof of part (a)(b)}\quad We may rewrite the recurrence formula as
$$
Q(t) \leq \frac{\lambda}{t} \int_{0}^{t} Q\left(t^{\prime}\right) d t^{\prime}+o(1).
$$
We may take upper limits of both sides and obtain an inequality 
$$
\limsup _{t \rightarrow+\infty} Q(t) \leq \limsup _{t \rightarrow+\infty} \frac{\lambda}{t} \int_{0}^{t} Q\left(t^{\prime}\right) d t^{\prime} \leq \lambda \limsup _{t \rightarrow+\infty} Q(t).
$$
We recall the fact $\lambda\in(0,1)$ and observe that $Q(t)\lesssim E$ is uniformly bounded, therefore we have 
$$
\limsup _{t \rightarrow+\infty} Q(t)=0 .
$$
This verifies (a)(b).\\\\
\textbf{Proof of part (c)}\quad Now we assume that initial data satisfy additional decay assumption. We start by multiplying both sides by $t^{k-1}$ and integrate from $t=1 $ to $ t=T$, utilize our assumption on initial data, then obtain
$$
\begin{aligned}
\int_{1}^{T} t^{\kappa-1} Q(t) d t & \leq \int_{1}^{T} t^{\kappa-1}\left(\frac{\lambda}{t} \int_{0}^{t} Q\left(t^{\prime}\right) d t^{\prime}\right) d t+C_{\kappa} \int_{\mathbb{R}^{d}} \min \left\{|x|,|x|^{\kappa}\right\} e(x, 0) d x \\
& \leq \frac{\lambda}{1-\kappa} \int_{0}^{T} \min \left\{\left(t^{\prime}\right)^{\kappa-1}, 1\right\} Q\left(t^{\prime}\right) d t^{\prime}+C_{\kappa} \int_{\mathbb{R}^{d}} \min \left\{|x|,|x|^{\kappa}\right\} e(x, 0) d x \\
& \leq \frac{\lambda}{1-\kappa} \int_{1}^{T}\left(t^{\prime}\right)^{\kappa-1} Q\left(t^{\prime}\right) d t^{\prime}+C_{\kappa} \int_{\mathbb{R}^{d}} \min \left\{|x|,|x|^{\kappa}\right\} e(x, 0) d x+C_{k} E.
\end{aligned}
$$
Here$$\frac{\lambda}{1-\kappa}=\frac{4-(d-1)(p-1)}{2(1-\kappa)}<1,$$
since we have assumed $\kappa<\frac{(d-1)(p-1)-2}{2}$. Therefore we have $$
\int_{1}^{T} t^{\kappa-1} Q(t) d t \lesssim _{p, \kappa} \int_{\mathbb{R}^{d}} \min \left\{|x|,|x|^{\kappa}\right\} e(x, 0) d x+E.
$$
Because neither the right hand side nor the implicit constant here depends on $T$, we make $T\to+\infty$ to conclude$$
\int_{1}^{\infty} t^{\kappa-1} Q(t) d t<+\infty.
$$
Combining this with the fact $Q(t)\lesssim E$, we have
$$\int_{0}^{\infty} t^{\kappa-1} Q(t) d t<+\infty.$$
We may multiply both sides of the recurrence formula by $t^{\kappa}$:
$$t^{\kappa}Q(t) \leq \lambda \int_{0}^{t} t^{\kappa-1} Q\left(t^{\prime}\right) d t^{\prime}+2 \int_{\mathbb{R}^{d}} \min \{|x| / t^{\kappa-1}, t^\kappa\} e(x, 0) d x.$$
Finally we apply dominated convergence theorem to finish the proof of Theorem 1.1.
\begin{remark}
When $d>3$, we have $$
\left|u_{r}+u_{t}\right|^{2} \lesssim_{1}\left|u_{r}+\frac{(d-1)u}{2|x|}+u_{t}\right|^{2}+\frac{(d-1)(d-3)}{8}\frac{|u|^{2}}{|x|^{2}},\qquad\qquad \text { if }\,\,|x|>t .
$$ 
But the term $\frac{(d-1)(d-3)}{8}\frac{|u|^{2}}{|x|^{2}}=0$ when $d=3$, so the inequality above does not hold. We can redefine
$$
\begin{aligned}
Q(t)=& \frac{1}{p+1} \int_{\mathbb{R}^{d}}\left(|u(x, t)|^{p+1}+|u(x,-t)|^{p+1}\right) d x+c_{1} \sum_{\pm} \int_{|x|<t} \frac{t-|x|}{t}\left(|\nabla u(x, \pm t)|^{2}+\left|u_{t}(x, \pm t)\right|^{2}\right) d x \\
& +c_{2} \sum_{\pm} \int_{|x|<t}\left(\left|\left(u_{r} \pm u_{t}\right)(x, \pm t)\right|^{2}+|\nabla\mkern-13mu/ u(x, \pm t)|^{2}\right) d x\\
& +c_{2} \sum_{\pm} \int_{|x|>t}\left(\left|u_{r}+\frac{(d-1) u}{2|x|} \pm u_{t}\right|^{2}+|\nabla\mkern-13mu/ u(x, \pm t)|^{2}\right) d x.
\end{aligned}
$$
We also have $$\limsup_{t \rightarrow+\infty} Q(t)=0,$$ and $$t^{\kappa}Q(t) \leq \lambda \int_{0}^{t} t^{\kappa-1} Q\left(t^{\prime}\right) d t^{\prime}+2 \int_{\mathbb{R}^{d}} \min \{|x| / t^{\kappa-1}, t^\kappa\} e(x, 0) d x.$$
Because $$\left|u_{r}+u_{t}\right|^{2} \lesssim_{1}\left|u_{r}+\frac{(d-1)u}{2|x|}+u_{t}\right|^{2}+c\frac{|u|^{2}}{|x|^{2}},\qquad\qquad \text { if }\,\,|x|>t .$$
We can finish the proof of Theorem 1.1 by an estimate.
$$
\begin{aligned}
\int_{|x|>t} \frac{|u|^{2}}{|x|^{2}} d x & \lesssim \left(\int_{|x|>t}\left(|u|^{2}\right)^{\frac{p+1}{2}} d x\right)^{\frac{2}{p+1}}\left(\int_{|x|>t}\left(|x|^{-2}\right)^{\frac{p+1}{p-1}} d x\right)^{\frac{p-1}{p+1}} \\
& \lesssim_{p} t^{\frac{-2 \kappa}{p+1}+\frac{p-5}{p+1}}.
\end{aligned}
$$
When $d=3$, we have $p\in(2,3)$ and the inequality $$t^{\frac{-2\kappa}{p+1}+\frac{p-5}{p+1}}\ll t^{-\kappa},$$
thus
$$\int_{|x|>t} \frac{|u|^{2}}{|x|^{2}} d x \ll t^{-\kappa}.$$

\end{remark}
\section{Scattering Theory}
\subsection{Transformation to 1D}
In order to take full advantage of our radial assumption, we use the following transformation: if $u$ is a radial solution to (CP1), then $w(r, t)=r^{\frac{d-1}{2}} u(r, t)$, where $|x|=r$, is a solution to one-dimensional wave equation $$
\left(\partial_{t}+\partial_{r}\right)\left(w_{t}-w_{r}\right)=\partial_{t}^{2} w-\partial_{r}^{2} w=-\frac{(d-1)(d-3)}{4} r^{\frac{d-5}{2}} u-r^{\frac{d-1}{2}}|u|^{p-1} u
.$$
We define$$v_{+}(r,t)=w_{t}(r,t)-w_{r}(r,t);\qquad v_{-}(r,t)=w_{t}(r,t)+w_{r}(r,t).$$ 
A simple calculation shows that $v_{\pm}$ satisfy the equation
$$
\left(\partial_{t} \pm \partial_{r}\right) v_{\pm}(r, t)=\partial_{t}^{2} w-\partial_{r}^{2} w=-\frac{(d-1)(d-3)}{4} r^{\frac{d-5}{2}} u- r^{\frac{d-1}{2}}|u|^{p-1} u
.$$This gives variation of $v_{\pm}$ along characteristic lines $t \pm r= Const$.
$$
v_{+}\left(t_{2}-\eta, t_{2}\right)-v_{+}\left(t_{1}-\eta, t_{1}\right)=\int_{t_{1}}^{t_{2}} f(t-\eta, t) d t, \quad t_{2}>t_{1}>\eta;
\eqno{(15)}$$
$$
v_{-}\left(s-t_{2}, t_{2}\right)-v_{-}\left(s-t_{1}, t_{1}\right)=\int_{t_{1}}^{t_{2}} f(s-t, t) d t, \quad t_{1}<t_{2}<s .\eqno{(16)}
$$Here the function $f(r, t)$ is defined by$$
f(r, t)=-\frac{(d-1)(d-3)}{4} r^{\frac{d-5}{2}} u(r, t)- r^{\frac{d-1}{2}}|u|^{p-1} u(r, t)
.$$
Then we give the upper bounds of the integral above. According to Lemma 2.1 we have
$$
\begin{aligned}
& \int_{t_{1}}^{t_{2}}(t-\eta)^{\frac{d-5}{2}}|u(t-\eta, t)| d t \\
\leq &\left\{\int_{t_{1}}^{t_{2}}\left[(t-\eta)^{\frac{d-3}{2}}|u(t-\eta, t)|\right]^{2} d t\right\}^{1 / 2}\left\{\int_{t_{1}}^{t_{2}}\left[(t-\eta)^{-1}\right]^{2} d t\right\}^{1 / 2} \\
\leq &\left\{\int_{\eta}^{\infty}(t-\eta)^{d-3}|u(t-\eta, t)|^{2} d t\right\}^{1 / 2}\left(t_{1}-\eta\right)^{-1 / 2} \\
\lesssim &_{d} E^{1 / 2}\left(t_{1}-\eta\right)^{-1 / 2}.
\end{aligned}
$$
In addition we have
$$
\begin{aligned}
& \int_{t_{1}}^{t_{2}}(t-\eta)^{\frac{d-1}{2}}|u(t-\eta, t)|^{p} d t \\
\leq &\left\{\int_{t_{1}}^{t_{2}}\left[(t-\eta)^{\frac{(d-1) p}{p+1}}|u(t-\eta, t)|^{p}\right]^{\frac{p+1}{p}} d t\right\}^{\frac{p}{p+1}}\left\{\int_{t_{1}}^{t_{2}}\left[(t-\eta)^{-\frac{(d-1)(p-1)}{2(p+1)}}\right]^{p+1} d t\right\}^{\frac{1}{p+1}} \\
\leq &\left\{\int_{\eta}^{\infty}(t-\eta)^{d-1}|u(t-\eta, t)|^{p+1} d t\right\}^{\frac{p}{p+1}}\left\{\int_{t_{1}}^{t_{2}}(t-\eta)^{-\frac{(d-1)(p-1)}{2}} d t\right\}^{\frac{1}{p+1}} \\
\lesssim &_{d}  E^{\frac{p}{p+1}}\left(t_{1}-\eta\right)^{-\frac{(d-1)(p-1)-2}{2(p+1)}} .
\end{aligned}
$$
we combine these estimates above with (15) and (16) to obtain
\begin{lemma}
Let $u$ be a radial solution wave equation with a finite energy  E. Then we have
$$
\left|v_{+}\left(t_{2}-\eta, t_{2}\right)-v_{+}\left(t_{1}-\eta, t_{1}\right)\right| \lesssim_{d} E^{1 / 2}\left(t_{1}-\eta\right)^{-1 / 2}+ E^{\frac{p}{p+1}}\left(t_{1}-\eta\right)^{-\beta(d, p)/2} ;
$$
$$
\left|v_{-}\left(s-t_{2}, t_{2}\right)-v_{-}\left(s-t_{1}, t_{1}\right)\right| \lesssim_{d} E^{1 / 2}\left(s-t_{2}\right)^{-1 / 2}+ E^{\frac{p}{p+1}}\left(s-t_{2}\right)^{-\beta(d, p)/2}.
$$Here $\beta(d,p)=\frac{(d-1)(p-1)-2}{p+1}$.
\end{lemma}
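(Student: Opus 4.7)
The plan is to start from the characteristic-line formulas (15) and (16) and bound the two pieces of the source term $f(r,t)=-\frac{(d-1)(d-3)}{4}r^{(d-5)/2}u-r^{(d-1)/2}|u|^{p-1}u$ separately on the null line $\{(t-\eta,t):t_1\le t\le t_2\}$ (resp.\ $\{(s-t,t):t_1\le t\le t_2\}$). Each piece will produce one of the two terms in the claimed bound.

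For the linear-type piece $r^{(d-5)/2}u$, I would apply Cauchy--Schwarz to split
\[
\int_{t_1}^{t_2}(t-\eta)^{(d-5)/2}|u(t-\eta,t)|\,dt\le\Bigl(\int_{t_1}^{t_2}(t-\eta)^{d-3}|u(t-\eta,t)|^2\,dt\Bigr)^{1/2}\Bigl(\int_{t_1}^{t_2}(t-\eta)^{-2}\,dt\Bigr)^{1/2}.
\]
The second factor integrates explicitly to something $\lesssim (t_1-\eta)^{-1/2}$, and the first factor is to be controlled by $E^{1/2}$ using a null-cone/energy-flux bound. One natural way to justify the null-cone bound is to combine the pointwise estimate in Lemma 2.1 with the radial reduction $w=r^{(d-1)/2}u$: the quantity $(t-\eta)^{d-3}|u(t-\eta,t)|^2$ is essentially $|w(t-\eta,t)/(t-\eta)|^2$, which upon integrating along the outgoing null line is controlled by the energy flux of $w$ through that null cone, and the latter is in turn controlled by the usual Cauchy energy $E$. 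This gives the first summand $E^{1/2}(t_1-\eta)^{-1/2}$.

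For the nonlinear piece $r^{(d-1)/2}|u|^{p-1}u$, I would use H\"older with conjugate exponents $(p+1)/p$ and $p+1$:
\[
\int_{t_1}^{t_2}(t-\eta)^{(d-1)/2}|u|^p\,dt\le\Bigl(\int_{t_1}^{t_2}(t-\eta)^{d-1}|u|^{p+1}\,dt\Bigr)^{p/(p+1)}\Bigl(\int_{t_1}^{t_2}(t-\eta)^{-(d-1)(p-1)/2}\,dt\Bigr)^{1/(p+1)}.
\]
The factor with the $|u|^{p+1}$ weight is again a potential-energy flux through the outgoing null cone and is bounded by $E$ by the same type of reduction (the weight $(t-\eta)^{d-1}$ is exactly the Jacobian needed to express the spherical integral of $|u|^{p+1}$ on the cone). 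The remaining weight integrates against $(t-\eta)^{-(d-1)(p-1)/2}$, which under our subconformal hypothesis has exponent strictly less than $-1$ and evaluates to a multiple of $(t_1-\eta)^{1-(d-1)(p-1)/2}$; raising to the $1/(p+1)$ power yields the decay $(t_1-\eta)^{-\beta(d,p)/2}$ with $\beta(d,p)=((d-1)(p-1)-2)/(p+1)$.

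Adding the two estimates and substituting into (15) gives the first bound of the lemma. The second bound for $v_-$ follows by the same argument applied along the incoming characteristic $t+r=s$ using (16), with the role of $t_1-\eta$ played symmetrically by $s-t_2$. The one subtle point I expect to spend the most effort on is the rigorous justification of the two null-cone flux bounds, since Lemma 2.1 only gives pointwise information directly; the cleanest route is probably to insert the pointwise bound inside the integral and absorb the extra $r$ weights so that what remains is an integral of energy-density quantities transverse to the characteristic, then estimate by the conserved energy.
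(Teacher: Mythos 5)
Your proposal follows essentially the same route as the paper: the same splitting of $f$ into the linear piece $r^{(d-5)/2}u$ and the nonlinear piece $r^{(d-1)/2}|u|^{p-1}u$, the same Cauchy--Schwarz estimate with weight $(t-\eta)^{-2}$ for the former, and the same H\"older estimate with exponents $\tfrac{p+1}{p}$ and $p+1$ for the latter, yielding exactly the exponents $-1/2$ and $-\beta(d,p)/2$. The null-cone flux bounds you flag as the delicate point are precisely the ones the paper also invokes (attributing them to Lemma 2.1) without further elaboration, so your treatment is no less complete than the original.
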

\subsection{Scattering by energy decay}In this section we prove Theorem 1.2. Let us recall the lemma 5.1, we obtain there exists a function $g_{+}(\eta) \in L^{2}(\mathbb{R})$ with $\left\|g_{+}\right\|_{L^{2}(\mathbb{R})}^{2} \leq E / c_{d}$, so that$$
v_{+}(t-\eta, t) \rightarrow 2 g_{+}(\eta) \quad \text { in } L_{l o c}^{2}(\mathbb{R}), \quad \text { as } t \rightarrow+\infty.
$$ The asymptotic behaviour of $v_{-}$ is similar as $t\rightarrow-\infty$$$
v_{-}(s-t, t) \rightarrow 2 g_{-}(s) \quad \text { in } L_{l o c}^{2}(\mathbb{R}), \quad \text { as } t \rightarrow-\infty,
$$let $t_{2}\to+\infty$ in the first inequality of lemma 5.1, we have $$
\left|2 g_{+}(\eta)-v_{+}(t-\eta, t)\right| \lesssim_{d,p,E}(t-\eta)^{-\beta(d, p)/2}, \quad \eta<t-1.
$$We apply a change of variable $r=t-\eta$ and rewrite this in the form $$
\left|v_{+}(r, t)-2 g_{+}(t-r)\right| \lesssim_{d,p,E} r^{-\beta(d, p)/2}, \quad r>1.
$$
Similarly we have$$
\left|v_{-}(r, t)-2 g_{-}(t+r)\right| \lesssim_{d,p,E} r^{-\beta(d, p)/2}, \quad r>1.
$$These gives the following upper limits $$
\limsup _{t \rightarrow+\infty} \int_{t-c \cdot t^{ \beta(d, p)}}^{t+R}\left(\left|v_{+}(r, t)-2 g_{+}(t-r)\right|^{2}+\left|v_{-}(r, t)-2 g_{-}(t+r)\right|^{2}\right) d r \lesssim_{d,p,E} c.
\eqno{(17)}$$
We ignore $g_{-}(t+r)$ in the upper limits above because$$
\lim _{t \rightarrow+\infty} \int_{0}^{\infty}\left|g_{-}(t+r)\right|^{2} d r=\lim _{t \rightarrow+\infty} \int_{t}^{\infty}\left|g_{-}(s)\right|^{2} d s=0.
$$
We recall $v_{\pm}=w_{t} \mp w_{r}$ and rewrite the upper limits above in term of $w$ $$
\limsup _{t \rightarrow+\infty} \int_{t-ct^{\beta(d,p)}}^{t+R}\left(\left|w_{r}(r, t)+g_{+}(t-r)\right|^{2}+\left|w_{t}(r, t)-g_{+}(t-r)\right|^{2}\right) d r \lesssim _{d,p,E} c.
$$
Next we utilize the identities $r^{\frac{d-1}{2}} u_{r}=w_{r}-(d-1) r^{\frac{d-3}{2}} u / 2, r^{\frac{d-1}{2}} u_{t}=w_{t}$ and a direct consequence of the pointwise estimate $|u(r, t)| \lesssim_{d, E} r^{-\frac{2(d-1)}{p+3}}$( lemma 2.1 ) to conclude$$
\limsup _{t \rightarrow+\infty} \int_{t-c \cdot t^{ \beta(d, p)}}^{t+R}\left(\left|r^{\frac{d-1}{2}} u_{r}(r, t)+g_{+}(t-r)\right|^{2}+\left|r^{\frac{d-1}{2}} u_{t}(r, t)-g_{+}(t-r)\right|^{2}\right) d r \lesssim_{d,p,E} c. \eqno{(18)}
$$
By lemma 1.2 (radiation fields), there exists a radial free wave $\tilde{u}^{+}$, so that
$$
\lim_{t \rightarrow+\infty} \int_{0}^{\infty}\left(\left|r^{\frac{d-1}{2}} \tilde{u}_{r}^{+}(r,t)+g_{+}(t-r)\right|^{2}+\left|r^{\frac{d-1}{2}} \tilde{u}_{t}^{+}(r,t)-g_{+}(t-r)\right|^{2}\right) d r =0. \eqno{(19)}
$$
Therefore we have
$$
\limsup _{t \rightarrow+\infty} \int_{t-c \cdot t^{\beta(d,p)}}^{t+R} r^{d-1} \left(\left|u_{t}(r, t)-\tilde{u}_{t}^{+}(r, t)\right|^{2}+\left|u_{r}(r, t)-\tilde{u}_{r}^{+}(r, t)\right|^{2}\right) d r \lesssim_{d,p,E} c.
\eqno{(20)}$$
Finite speed of propagation of energy implies 
$$
\limsup_{R\to \infty \, t>0}\int_{t+R}^{\infty}r^{d-1}\left(\left|u_{t}(r, t)-\tilde{u}_{t}^{+}(r, t)\right|^{2}+\left|u_{r}(r, t)-\tilde{u}_{r}^{+}(r, t)\right|^{2}\right) d r=0.\eqno{(21)}
$$
We combine $(20)$ with $(21)$ and obtain
$$
\limsup _{t \rightarrow+\infty} \int_{t-c \cdot t^{\beta(d,p)}}^{+\infty} r^{d-1}\left(\left|u_{t}(r, t)-\tilde{u}_{t}^{+}(r, t)\right|^{2}+\left|u_{r}(r, t)-\tilde{u}_{r}^{+}(r, t)\right|^{2}\right) d r \lesssim_{d,p,E} c. \eqno{(22)}
$$
Finally we consider the region $\left\{x:|x|<t-c \cdot t^{\beta(p)}\right\}$. We utilize the conclusion of Theorem 1.1.(c), and obtain
$$
\lim _{t \rightarrow+\infty} \int_{|x|<t-c \cdot t^{\beta(d,p)}} e(x, t) d x \lesssim_{c} \lim _{t \rightarrow+\infty} t^{\frac{(2-d)p+(d+2)}{p+1}} \int_{|x|<t} \frac{t-|x|}{t} e(x, t) d x=0.
$$
Please note that in the sub-conformal range our assumption $p>\frac{3-d+2\sqrt{d^2-d+1}}{d-1}$ guarantees that $\frac{(2-d)p+(d+2)}{p+1}<\frac{(d-1)(p-1)-2}{2}$.\\ 
We also have$$
\lim _{t \rightarrow+\infty}\int_{|x|<t-ct^{\beta(p)}}\left(\left|\nabla \tilde{u}^{+}(x, t)\right|^{2}+\left|\tilde{u}_{t}^{+}(x, t)\right|^{2}\right) d x=0.
$$Combining these two limits we obtain$$
\lim _{t \rightarrow+\infty}\int_{|x|<t-ct^{\beta(d,p)}} \left(\left|\nabla \tilde{u}^{\pm}(x, t)-\nabla u(x, t)\right|^{2}+\left|\tilde{u}_{t}^{\pm}(x, t)-u_{t}(x, t)\right|^{2}\right) d x=0.
$$
We combine this with stronger exterior scattering to conclude $$
\limsup _{t \rightarrow+\infty} \int_{\mathbb{R}^{d}}\left(\left|\nabla \tilde{u}^{\pm}(x, t)-\nabla u(x, t)\right|^{2}+\left|\tilde{u}_{t}^{\pm}(x, t)-u_{t}(x, t)\right|^{2}\right) d x \lesssim_{d,p,E}  c.
$$
We make $c\rightarrow0^{+}$ and finish the proof.
\section*{Acknowledgement}
The second author is financially supported by National Natural Science Foundation of China Projects 12071339, 11771325. 

\end{document}